\newtheorem{thm}{Theorem}[section]
\newtheorem{lem}[thm]{Lemma}
\newtheorem{cor}[thm]{Corollary}
\newtheorem{prop}[thm]{Proposition}
\theoremstyle{definition}
\newtheorem{note}[thm]{Note}
\newcommand{\R}{\mathbf{R}}
\newcommand{\ol}{\overline}
\newcommand{\C}{\mathcal{C}}
\newcommand{\p}{\partial}
\newcommand{\lan}{\langle}
\newcommand{\ran}{\rangle}
\renewcommand{\S}{\mathbf{S}}
\renewcommand{\l}{\langle}
\renewcommand{\r}{\rangle}
\renewcommand{\(}{\left(}
\renewcommand{\)}{\right)}
\renewcommand{\tilde}{\widetilde}
\DeclareMathOperator{\dist}{dist}
\DeclareMathOperator{\conv}{conv}
\DeclareMathOperator{\Jac}{Jac}
\title[]{The length, width, and inradius of space curves}
\author{Mohammad Ghomi}
\address{School of Mathematics, Georgia Institute of Technology,
Atlanta, GA 30332}
\email{ghomi@math.gatech.edu}
\urladdr{www.math.gatech.edu/$\sim$ghomi}
\date{\today \,(Last Typeset)}
\subjclass[2000]{Primary: 53A04,  52A40; Secondary: 52A38, 52A15.}
\keywords{Width, inradius, convex hull, second hull, Crofton's formulas, sphere inspection, escape path, asteroid survey, optimal search pattern, Bellman's  lost in a forest problem.}
\thanks{Research of the  author was supported in part by NSF Grant DMS--1308777.}
\begin{document}


\begin{abstract} 
The width $w$ of a curve $\gamma$ in Euclidean space $\R^n$ is the infimum of the distances between all pairs of parallel hyperplanes which bound $\gamma$, while its inradius $r$ is the supremum of the radii of all spheres which are contained in the convex hull of $\gamma$ and are disjoint from $\gamma$. We use a mixture of topological and integral geometric techniques, including an application of Borsuk Ulam theorem due to Wienholtz and Crofton's formulas, to obtain lower bounds on the length of $\gamma$ subject to constraints on $r$ and $w$.    The special case of closed curves is also considered in each category. Our estimates confirm some conjectures of Zalgaller up to $99\%$ of their stated value, while we also disprove one of them.
\end{abstract}

\maketitle


\tableofcontents

\section{Introduction}

What is the smallest length of wire which  can be bent into a shape that never falls through the gap behind a desk? 
What is the shortest orbit which allows a satellite to survey  a spherical  asteroid? These are  well-known open problems \cite{zalgaller:1996,orourkeMO,cfg:book,hiriart:2008} in classical geometry of space curves  $\gamma\colon[a,b]\to\R^3$, which are concerned with minimizing the length $L$ of $\gamma$ subject to constraints on its \emph{width} $w$ and \emph{inradius} $r$ respectively. Here  $w$  is the infimum of the distances between all pairs of parallel planes which bound $\gamma$, while $r$ is the supremum of the radii of all spheres which are contained in the convex hull of $\gamma$ and are disjoint from $\gamma$.   In 1994--1996 Zalgaller  \cite{zalgaller:1994, zalgaller:1996} conjectured four explicit solutions to these problems, including the cases where $\gamma$ is restricted to be closed, i.e., $\gamma(a)=\gamma(b)$. In this work we  confirm Zalgaller's conjectures between $83\%$ and $99\%$ of their stated value, while we also find a counterexample to one of them. Our estimates for the width problem are as follows:

\begin{thm}\label{thm:main1}
For any rectifiable curve $\gamma\colon[a,b]\to\R^3$,
\begin{equation}\label{eq:0}
\frac{L}{w}\geq3.7669.
\end{equation}
Furthermore if $\gamma$ is closed, 
\begin{equation}\label{eq:1}
\frac{L}{w}\geq \sqrt{\pi^2+16}> 5.0862.
\end{equation}
\end{thm}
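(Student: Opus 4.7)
The plan is to bound $L(\gamma)$ from below via a Pythagorean decomposition relative to a carefully chosen direction. Fix a unit vector $u\in\S^2$ and decompose $\R^3=u^\perp\oplus\R u$, writing $\gamma(t)=(\gamma_\perp(t),\gamma_\parallel(t))$. Let $L_\perp(u):=\length(\gamma_\perp)$ and let $L_\parallel(u)$ denote the total variation of $\gamma_\parallel$. Since $|\dot\gamma|^2=|\dot\gamma_\perp|^2+|\dot\gamma_\parallel|^2$ pointwise, applying the vector triangle inequality $\|\int v\,dt\|\le\int\|v\|\,dt$ in $\R^2$ to the vector $v(t)=(|\dot\gamma_\perp(t)|,|\dot\gamma_\parallel(t)|)$ yields the Pythagorean bound
\begin{equation*}
L(\gamma)\;\ge\;\sqrt{L_\perp(u)^2+L_\parallel(u)^2}.
\end{equation*}

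For the closed estimate \eqref{eq:1}, I would exploit this inequality as follows. The projection $\gamma_\perp$ is a closed planar curve whose width inside $u^\perp$ is at least $w$, since for any $v\perp u$ the width of $\gamma$ in direction $v$ equals the planar width of $\gamma_\perp$ in that direction. A closed planar curve of width $\ge w$ has length $\ge\pi w$: its convex hull has perimeter at least $\pi w$ by Cauchy's formula, and Crofton's formula forces the curve itself to be at least as long, since every line through the interior of the hull meets the curve at least twice. Hence $L_\perp(u)\ge\pi w$ for \emph{every} $u$. Next I would invoke the Borsuk--Ulam theorem of Wienholtz, applied to a suitable odd map $\S^2\to\R^2$ built from pairs of parallel supporting hyperplanes to $\gamma$, to produce a direction $u$ for which $\gamma_\parallel$ oscillates through a range of $w$ at least twice, so that $L_\parallel(u)\ge 4w$. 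Substituting into the Pythagorean bound yields $L(\gamma)\ge\sqrt{\pi^2+16}\,w$.

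The open estimate \eqref{eq:0} follows the same template, but the two one-dimensional bounds degrade: $\gamma_\perp$ is now an open planar curve, so the $\pi w$ bound is replaced by a weaker estimate coming from the plane analogue of the length-width problem (closely related to Bellman's lost-in-a-forest problem), and the absence of closedness forfeits the factor of two in $L_\parallel(u)$. A Wienholtz-style Borsuk--Ulam argument again produces a distinguished direction $u$, and the constant $3.7669$ emerges from optimizing the resulting Pythagorean inequality.

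The principal obstacle is producing the distinguished direction $u$: isolated lower bounds on $L_\perp(u)$ and $L_\parallel(u)$ are straightforward, but the Pythagorean decomposition demands that \emph{both} bounds hold simultaneously at a single $u$, and this is precisely where the Wienholtz/Borsuk--Ulam input enters. Any slack between our constants and Zalgaller's conjectured optimal values can be attributed to the efficiency of this topological step.
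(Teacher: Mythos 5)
Your proposal follows essentially the same route as the paper: the Pythagorean length--decomposition lemma, Barbier's $\pi w$ bound (resp.\ the planar caliper constant $2.2782$) for the projection into the distinguished plane, and Wienholtz's Borsuk--Ulam theorem to produce a direction in which the one-dimensional projection has length at least $4w$ (resp.\ $3w$ for open arcs, since the four alternating support-plane contacts survive the passage from closed to open curves). The only imprecision is your phrase ``forfeits the factor of two'' in the open case --- the correct degradation is from $4w$ to $3w$, which is exactly what yields $\sqrt{3^2+2.2782^2}\approx 3.7669$.
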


  In \cite{zalgaller:1994} Zalgaller constructs a curve, ``$L_3$", with $L/w\leq 3.9215$. Thus \eqref{eq:0} is better than $96\%$ sharp (since $3.7669/3.9215\geq 0.9605$). Further,
in Section \ref{sec:example} we will construct a closed cylindrical curve (Figure \ref{fig:cylinderandsphere}(a)) with $L/w< 5.1151$, which shows  that \eqref{eq:1} is at least $99.43\%$ sharp. In particular, the length of the shortest closed curve of width $1$ is approximately $5.1$.
 It has been known since Barbier in 1860 \cite{barbier:1860}, and follows from the Cauchy-Crofton formula (Lemma \ref{lem:cc}), that for \emph{closed} planar curves $L/w\geq\pi$, where equality holds only for curves of constant width. The corresponding question for \emph{general} planar curves, however, was answered only in 1961 when  Zalgaller \cite{zalgaller:1961} produced a caliper shaped curve (Figure \ref{fig:calipers}(a)) with $L/w\approx 2.2782$, which has been subsequently rediscovered several times  \cite{adhikari&pitman,klotzler:1987}; see \cite{alexander:2009,finch&wetzel}. In 1994, Zalgaller \cite{zalgaller:1994} studied the width problem for curves in $\R^3$, and produced a closed  curve,  ``$L_5$", which he claimed to be minimal;  however,   our cylindrical example in Section \ref{sec:example} improves upon Zalgaller's curve.
 
  \begin{figure}[h]
   \centering
    \begin{overpic}[height=1.25in]{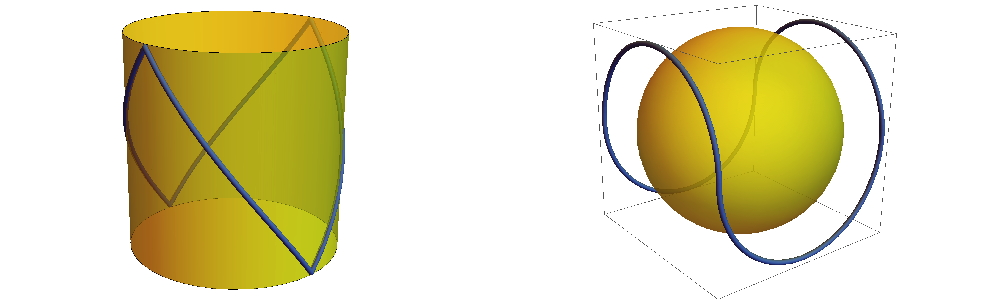} 
    \put(21.5,-3){\small(a)}
    \put(75,-2){\small(b)}
        \end{overpic}
   \caption{}
   \label{fig:cylinderandsphere}
\end{figure}

Next we describe our estimates for the inradius problem. Obviously $w\geq 2r$, and thus Theorem \ref{thm:main1} immediately yields   $L/r\geq 7.5338$ for general curves, and $L/r\geq 10.1724$ for closed curves. Using different techniques, however, we will improve these estimates as follows:

\begin{thm}\label{thm:main2}
For any rectifiable curve $\gamma\colon[a,b]\to\R^3$,
\begin{equation}\label{eq:2}
 \frac{L}{r}\geq \sqrt{(\pi+2)^2+36}> 7.9104.
\end{equation}
Furthermore if $\gamma$ is closed, 
\begin{equation}\label{eq:3}
\frac{L}{r}\geq 6\sqrt 3> 10.3923.
\end{equation}
\end{thm}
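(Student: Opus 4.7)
The proof plan is to combine an orthogonal decomposition of $\gamma$ with a vector Minkowski inequality. For any $u\in\S^2$, let $\alpha,\beta$ be the orthogonal projections of $\gamma$ to the plane $\Pi:=u^\perp$ and the line $\R u$ respectively, with lengths $L_\alpha,L_\beta$. From $|\gamma'|^2=|\alpha'|^2+|\beta'|^2$ and the vector inequality $|\int \mathbf v\,dt|\le\int|\mathbf v|\,dt$ applied to $\mathbf v(t):=(|\alpha'(t)|,|\beta'(t)|)\in\R^2$, we get
\[
L \;=\; \int_a^b\sqrt{|\alpha'|^2+|\beta'|^2}\,dt \;\ge\; \sqrt{L_\alpha^{\,2}+L_\beta^{\,2}}.
\]
After rescaling to $r=1$ with $B:=B(0,1)\subset\conv(\gamma)\setminus\gamma$, the task is to lower-bound $L_\alpha$ and $L_\beta$ using the inradius hypothesis, for a carefully chosen direction $u$.

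For the non-closed inequality \eqref{eq:2}, I would use a Borsuk--Ulam/Wienholtz argument to locate a direction $u\in\S^2$ for which the axis projection $\beta$ covers the diameter $\pi_u(B)=[-1,1]$ at least three times, yielding $L_\beta\ge 6$. The extra multiplicity beyond the obvious traversal is forced by the topological condition that $\gamma$ is disjoint from $B$ while $B\subset\conv(\gamma)$. For the planar projection $\alpha\subset\Pi$, whose convex hull contains the unit disk $D:=\Pi\cap B$, I would prove $L_\alpha\ge \pi+2$ via a planar inscribed-disk lemma, attained by the half-disk boundary (semicircle plus diameter). The proof starts from $L_\alpha\ge\mathrm{perim}(\conv(\alpha))-|\alpha(a)-\alpha(b)|$ together with $\mathrm{perim}(\conv(\alpha))\ge 2\pi$, followed by a careful optimization over the endpoint chord. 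Combining via Minkowski gives $L\ge\sqrt{(\pi+2)^2+36}$.

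For the closed inequality \eqref{eq:3}, the naive single-direction Minkowski estimate with $L_\beta\ge 4$ (closed loop crossing the diameter twice) and $L_\alpha\ge 2\pi$ (closed planar curve whose convex hull contains a unit disk) yields only $L\ge\sqrt{4\pi^2+16}\approx 7.45$, which is too weak. I would strengthen this by coupling the Minkowski inequality with the Cauchy--Crofton identity $\int_{\S^2} L_u\,d\sigma(u)=2\pi L$ and its planar analogue: these averaging formulas force at least one direction where both projections exceed their pointwise minima, and combining with a Jensen-type argument together with an additional second-hull input (Wienholtz) to sharpen the axis multiplicity pushes the bound up to $\sqrt{108}=6\sqrt 3$.

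The main obstacle is the Borsuk--Ulam step for the non-closed case: producing the direction $u$ for which the axis projection covers the diameter three times requires adapting Wienholtz's closed-curve technique to open rectifiable arcs, and extracting the multiplicity ``three'' in the non-closed setting is delicate. The planar inscribed-disk lemma yielding $\pi+2$ and the Crofton averaging for the closed case are conceptually cleaner by comparison, though each requires careful justification once the direction-selection step is in place.
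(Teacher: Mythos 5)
Your treatment of \eqref{eq:2} is essentially the paper's argument: the length decomposition $L\ge\sqrt{L_\alpha^2+L_\beta^2}$, the Borsuk--Ulam/Wienholtz direction giving an axis projection of length at least $3w\ge 6r$, and the planar bound $\pi+2$ for an arc whose convex hull contains a unit disk. Two corrections, though. First, the multiplicity three on the axis comes from Wienholtz's theorem alone (it holds for \emph{every} curve, open or closed, via the four alternating contact points with two parallel support planes); the condition that $\gamma$ misses $B$ while $B\subset\conv(\gamma)$ is only used to convert $w\ge 2r$. Second, your sketch of the planar lemma does not work: the boundary of a half-disk (semicircle plus diameter of the \emph{unit} circle) has convex hull of inradius $1/2$, not $1$, so it is not the extremal configuration, and the chain $L_\alpha\ge \mathrm{perimeter}(\conv(\alpha))-|\alpha(a)-\alpha(b)|\ge 2\pi-|\text{chord}|$ cannot yield $\pi+2$ since the chord may well exceed $\pi-2$. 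This bound is Joris's theorem (the ``lost hunter'' horseshoe: a unit-circle semicircular arc with two unit tangent segments), a genuinely nontrivial result which the paper simply cites; you should do the same rather than attempt to rederive it.

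For \eqref{eq:3} there is a real gap. You correctly observe that the single-direction decomposition gives only $\sqrt{4\pi^2+16}\approx 7.45$, but the proposed repair --- ``coupling Minkowski with Cauchy--Crofton and a Jensen-type argument plus a second-hull input'' --- is not an argument: you never identify a direction, or an averaged quantity, for which the two projection lengths would combine to $108$, and no such single-direction scheme is known to reach $6\sqrt3$. The paper abandons projections entirely here and instead introduces the \emph{horizon}
$$
H(\gamma)=\int_{p\in\S^2}\#\big(\gamma^{-1}(T_p\S^2)\big)\,dp,
$$
the measure (with multiplicity) of tangent planes of the unit inscribed sphere meeting $\gamma$. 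The lower bound $H(\gamma)\ge 8\pi$ follows from Carath\'eodory's theorem: for a closed curve whose convex hull contains $\S^2$, every tangent plane $T_p\S^2$ meets $\gamma$ at least twice. The upper bound $H(\gamma)\le \tfrac{4\pi}{3\sqrt3}L$ is a pointwise Jacobian estimate for the map parametrizing the horizons, and it is valid only after a surgery lemma replacing $\gamma$ by a shorter closed polygon whose tangent lines avoid the open ball (an ``efficient'' inspection curve). Dividing $8\pi\le \tfrac{4\pi}{3\sqrt3}L$ gives $L\ge 6\sqrt3$. None of these three ingredients --- the horizon integral, the multiplicity-two lower bound, or the efficiency reduction --- is present in your outline, so the closed case remains unproved as written.
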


  In \cite[Sec. 2.12]{zalgaller:2003}, Zalgaller constructs a spiral curve with $L/r\leq 9.5767$, which shows that \eqref{eq:2} is better than $82.6\%$ optimal. Further, in \cite{zalgaller:1996}, he produces a curve composed of four semicircles (Figure \ref{fig:cylinderandsphere}(b)) with $L/r=4\pi$; see also \cite{orourkeMO} where this ``basebal stitches" is rediscovered  in 2011. Thus we may say that \eqref{eq:3} is better than $82.69\%$ optimal. For planar curves, the inradius  problem is nontrivial only for open arcs, and the answer, which is a horseshoe shaped curve (Figure \ref{fig:calipers}(b)) with $L/r=\pi+2$, was obtained  in 1980 by Joris \cite{joris},   see also  \cite[Sec. A30]{cfg:book}, \cite{faber-mycielski:1986,finch&wetzel,eggleston:1982}. Zalgaller studied the inradius problem for space arcs in 1994 \cite{zalgaller:1994} and for closed space curves in 1996 \cite{zalgaller:1996}. The latter problem also appears in Hiriart-Urruty \cite{hiriart:2008}.

 Both the width and inradius problems may be traced back to a 1956 question of Bellman  \cite{bellman:1956}: how long is the shortest escape path for a random point (lost hiker) inside an infinite parallel strip (forest) of known width? See  \cite{finch&wetzel} for more on these types of problems. Our width problem is the analogue of Bellman's question in $\R^3$.
   The inradius problem also has an intuitive reformulation known as the  ``sphere inspection" \cite{zalgaller:2003,orourkeMO} or the  ``asteroid surveying'' problem \cite{chan&golynski:2003}. 
   To describe this variation, let us say that a space curve $\gamma$ \emph{inspects} the sphere $\S^2$, or is an \emph{inspection curve}, provided that $\gamma$ lies outside $\S^2$ and for each point $x$ of $\S^2$ there exists a point $y$ of $\gamma$ such that the line segment $xy$ does not enter $\S^2$ (in other words, $x$ is ``visible" from $y$). It is easy to see that $\gamma$ inspects $\S^2$, after a translation, if and only if its inradius is $1$ \cite[p. 369]{zalgaller:1996}. Thus finding the shortest inspection curve  is equivalent to the inradius problem for $r=1$.   
   
   \begin{figure}[h]
   \centering
    \begin{overpic}[height=1in]{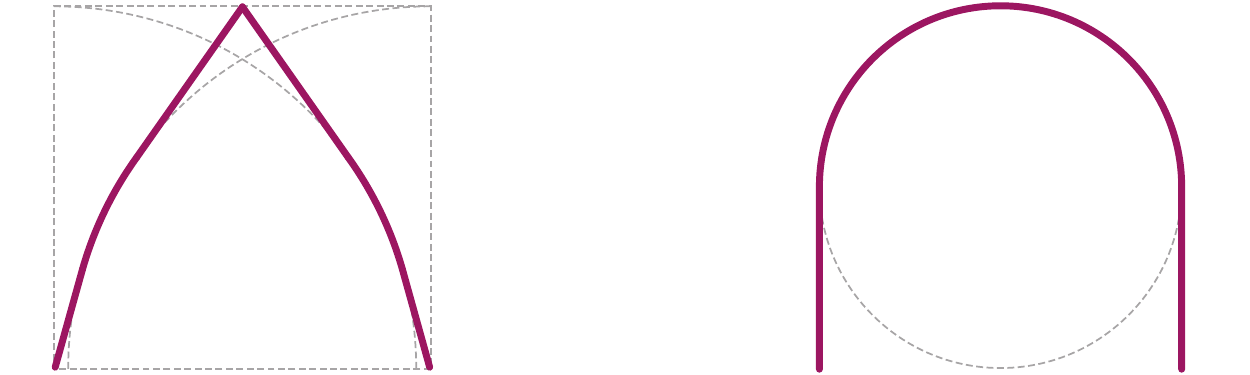} 
    \put(17.5,-4){\small(a)}
    \put(78,-4){\small(b)}
        \end{overpic}
   \caption{}
   \label{fig:calipers}
\end{figure}

 The proof of Theorem \ref{thm:main1} is based on an unpublished result of Daniel Wienholtz \cite{wiehnoltz:parallel}, which we include in Section \ref{sec:wienholtz}. This remarkable observation, which follows from Borsuk-Ulam theorem, states that any closed space curve has a pair of parallel support planes $H_0$, $H_1$ which intersect the curve at least twice each in alternating fashion 
 (Theorem \ref{thm:wienholtz}). Further, it is easy to see that the same result holds for general curves (Corollary \ref{cor:genwienholtz}).  Consequently the length $L_1$ of the projection of $\gamma$ into the line  orthogonal to $H_i$ must be at least $3w$ for general curves, and $4w$ for closed curves. On the other hand we can also bound from below the length $L_2$ of the projection of $\gamma$ into $H_i$ by known results in $\R^2$. These estimates yield \eqref{eq:0} and \eqref{eq:1} due to the basic inequality $L\geq\sqrt{L_1^2+L_2^2}$  (Lemma \ref{lem:wienholtz}). 
 The proof of \eqref{eq:2} also follows from Wienholtz's theorem, once we utilize the theorem of Joris on the  inradius problem for planar curves. To prove \eqref{eq:3}, on the other hand, we develop the notion of \emph{horizon} of a curve, $H(\gamma)$, which is the measure of the tangent planes of $\S^2$, counted with multiplicity, that intersect $\gamma$. In Section \ref{sec:main2}, we derive upper and lower bounds for the horizon, which lead to the proof of \eqref{eq:3}. Finally in Section \ref{sec:general} we 
 discuss some generalizations of our estimates, including an
 extension of the inradius estimate to the notion of $n^{th}$ \emph{hull} in geometric knot theory \cite{CKKS}.
 
\section{Preliminaries: Projections of Length}

Here we will record some basic lemmas on length of projections of curves which will be useful throughout the paper.
In this work a \emph{curve} is a continuous mapping $\gamma\colon[a,b]\to\R^n$. By abuse of notation, we also use $\gamma$ to denote its image $\gamma([a,b])$, and  say that $\gamma$ is \emph{closed} if $\gamma(a)=\gamma(b)$. 
The \emph{length} of $\gamma$ is defined as
$$
L=L[\gamma]:=\sup \sum_{i=1}^n \|\gamma(t_i)-\gamma(t_{i-1})\|,
$$
where the supremum is taken over all partitions $a:=t_0\leq t_1\leq\dots\leq t_n:=b$ of $[a,b]$, $n\in\mathbf{N}$. We say that $\gamma$ is \emph{rectifiable} provided that $L$ is finite. Further $\gamma$ is \emph{parametrized by arclength} or has \emph{unit speed} if $L[\gamma|_{[t_1,t_2]}]=t_2-t_1$ for all $t_1$, $t_2\in [a,b]$. A curve $\tilde\gamma\colon [c,d]\to \R^n$ is a \emph{reparametrization} of $\gamma$ provided that there exists a nondecreasing continuous map $\phi\colon [a,b]\to[c,d]$ such that $\gamma=\tilde\gamma\circ\phi$. It is easy to see that $L[\gamma]=L[\tilde\gamma]$. If $\tilde\gamma$ has unit speed, then we say that it is a reparametrization of $\gamma$ by arclength. 

 The first lemma below collects some basic facts from measure theory, which allow us to extend some well-known analytic arguments  to all rectifiable curves.

\begin{lem}\label{lem:1}
Let $\gamma\colon[a,b]\to\R^n$ be a rectifiable curve.
\begin{enumerate}
\item{If $\gamma$ is Lipschitz, then $\gamma'(t)$ exists for almost all $t\in[a,b]$ (with respect to the Lebesgue measure), and 
$
L=\int_a^b \|\gamma'(t)\|\,dt.
$}
\item{If $\gamma$ is parametrized by arclength, then
$
\|\gamma'(t)\|=1,
$
for almost all $t\in[a,b]$.}

\item{There exists a (Lipschitz) reparametrization  of $\gamma$ by arclength.}
\end{enumerate}
\end{lem}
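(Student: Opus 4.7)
My plan is to handle the three parts in order, invoking standard facts from one-dimensional real analysis. For (1), I would first note that a Lipschitz map $\gamma\colon[a,b]\to\R^n$ is componentwise Lipschitz and hence each coordinate is absolutely continuous, so by the classical one-dimensional theorem each $\gamma_j'$ exists almost everywhere and satisfies the fundamental theorem of calculus. Consequently $\gamma'$ exists a.e.\ and $\|\gamma'\|$ is bounded and measurable. To obtain $L=\int_a^b\|\gamma'\|\,dt$, I would prove both inequalities: the bound
$$\|\gamma(t_i)-\gamma(t_{i-1})\|=\Big\|\int_{t_{i-1}}^{t_i}\gamma'(t)\,dt\Big\|\leq\int_{t_{i-1}}^{t_i}\|\gamma'(t)\|\,dt,$$
summed over a partition and passed to the supremum, gives $L\leq\int_a^b\|\gamma'\|\,dt$. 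The reverse inequality follows by a Riemann-sum argument: along a sequence of partitions of vanishing mesh, $\|\gamma(t_i)-\gamma(t_{i-1})\|/(t_i-t_{i-1})\to\|\gamma'\|$ a.e., and dominated convergence (the difference quotients are bounded by the Lipschitz constant) yields $\sum\|\gamma(t_i)-\gamma(t_{i-1})\|\to\int\|\gamma'\|$, hence $L\geq\int_a^b\|\gamma'\|\,dt$.

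For (2), my plan is to observe that an arclength parametrization is automatically $1$-Lipschitz, since for $t_1<t_2$ one has $\|\gamma(t_2)-\gamma(t_1)\|\leq L[\gamma|_{[t_1,t_2]}]=t_2-t_1$. Hence (1) applies, and for every $t\in[a,b]$,
$$t-a \;=\; L[\gamma|_{[a,t]}] \;=\; \int_a^t\|\gamma'(s)\|\,ds.$$
Differentiating via the Lebesgue differentiation theorem gives $\|\gamma'(t)\|=1$ a.e.

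For (3), I would introduce the cumulative-length function $s\colon[a,b]\to[0,L]$ defined by $s(t):=L[\gamma|_{[a,t]}]$. This is nondecreasing with $s(a)=0$ and $s(b)=L$, and the key technical point is continuity of $s$. I would verify it by a standard $\varepsilon$-argument: for any $\varepsilon>0$, choose a partition whose polygonal sum exceeds $L-\varepsilon/2$, then use uniform continuity of $\gamma$ to insert a nearby point and bound the one-sided jumps of $s$ at any given $t$ by $\varepsilon$. Next, whenever $s(t_1)=s(t_2)$ with $t_1<t_2$, the sub-arc has zero length and hence is constant, so the prescription $\tilde\gamma(u):=\gamma(t)$ for any $t\in s^{-1}(u)$ is unambiguous; since $s$ is continuous and surjective, this defines $\tilde\gamma$ on all of $[0,L]$ and satisfies $\gamma=\tilde\gamma\circ s$. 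The identity $L[\tilde\gamma|_{[0,u]}]=L[\gamma|_{[a,s^{-1}(u)]}]=u$ then shows that $\tilde\gamma$ is unit-speed, and in particular $1$-Lipschitz.

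The main obstacle I anticipate is the length identity in (1): absolute continuity of $\gamma$ and the reverse inequality both require care, although they are entirely classical. The other delicate step is the continuity of $s$ in (3); once that is in hand, the well-definedness of $\tilde\gamma$ and its unit-speed property are essentially bookkeeping.
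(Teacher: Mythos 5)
Your proposal is correct, but it takes a genuinely different route from the paper: the paper does not prove any of the three statements, it simply cites the literature (``the first statement is just Rademacher's theorem,'' Pelling and Burago--Burago--Ivanov for (2), Federer or Burago--Burago--Ivanov for (3)), whereas you reconstruct elementary one-dimensional proofs. Your arguments are sound. For (1), the upper bound $L\leq\int_a^b\|\gamma'\|$ via the fundamental theorem of calculus applied coordinatewise, and the lower bound via difference quotients over partitions of vanishing mesh plus dominated convergence, is the classical proof; note that at any point $t$ of differentiability the quotient $\|\gamma(t_i)-\gamma(t_{i-1})\|/(t_i-t_{i-1})$ over the enclosing partition interval does converge to $\|\gamma'(t)\|$, so the a.e.\ convergence you invoke holds. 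Your derivations of (2) from (1) by differentiating $t-a=\int_a^t\|\gamma'\|$, and of (3) via the cumulative length function $s$, are exactly the arguments in the cited references. What your approach buys is self-containedness, and it makes visible that in one variable the full strength of Rademacher's theorem is not needed: coordinatewise Lipschitz already gives absolute continuity, so Lebesgue's differentiation theory suffices. What the paper's approach buys is brevity, these being entirely standard facts. The only steps in your outline that would need to be written out carefully are the continuity of $s$ (your $\varepsilon$-argument is the right one) and the invariance of length under the monotone reparametrization $\gamma=\tilde\gamma\circ s$, which the paper itself records as an easy fact just before the lemma.
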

\begin{proof}
The first statement is just Rademacher's theorem. The second statement is proved in \cite[Thm. 2]{pelling}, see also \cite[Thm. 2.7.6]{bbi:book} or \cite[Thm. 4.1.6]{ambrosio}. For the third statement see \cite[2.5.16]{federer:book} or \cite[Prop. 2.5.9]{bbi:book}.
\end{proof}

The following lemma is a quick generalization of an observation of Wienholtz \cite{wienholtz:diameter}, which had been obtained by polygonal approximation,  see also \cite[Lem. 8.2]{sullivan:2008}. Here we offer a quick analytic proof which utilizes the above lemma.

\begin{lem}[Length Decomposition]\label{lem:wienholtz}
Let $\gamma\colon[a,b]\to\R^n$ be a rectifiable curve, $V_i$ be a collection of pairwise orthogonal subspaces   which span $\R^n$, and $L_i$ be the length of the orthogonal projection of $\gamma$ into $V_i$.
Then
$$
L\geq\sqrt{\sum_i L_i^2}.
$$ 
\end{lem}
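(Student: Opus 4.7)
The plan is to reduce to the arclength-parametrized case and then apply Minkowski's inequality for vector-valued integrals.

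First, by Lemma \ref{lem:1}(3), we may replace $\gamma$ with a Lipschitz reparametrization by arclength $\tilde\gamma\colon[0,L]\to\R^n$, which has the same length and the same image (hence the same projections and same $L_i$). So assume $\gamma$ itself is parametrized by arclength on $[0,L]$. Then by Lemma \ref{lem:1}(2), $\|\gamma'(t)\|=1$ for almost every $t$. Let $\pi_i\colon\R^n\to V_i$ denote the orthogonal projection. Since $\pi_i$ is linear with operator norm $1$, the composition $\pi_i\circ\gamma$ is Lipschitz, $(\pi_i\circ\gamma)'(t)=\pi_i(\gamma'(t))$ for almost every $t$, and by Lemma \ref{lem:1}(1)
\[
L_i=\int_0^L \|\pi_i(\gamma'(t))\|\,dt.
\]

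Next I would use that the $V_i$ are pairwise orthogonal and span $\R^n$, so for every vector $v\in\R^n$ we have $v=\sum_i \pi_i(v)$ and $\|v\|^2=\sum_i \|\pi_i(v)\|^2$. Applying this pointwise to $\gamma'(t)$ gives
\[
1=\|\gamma'(t)\|^2=\sum_i \|\pi_i(\gamma'(t))\|^2 \qquad \text{for almost every } t\in[0,L].
\]

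Finally I would package the nonnegative scalar functions $f_i(t):=\|\pi_i(\gamma'(t))\|$ into a vector-valued map $f:=(f_1,f_2,\dots)$ taking values in a finite-dimensional Euclidean space. The Minkowski (triangle) inequality for Bochner-type integrals then yields
\[
\sqrt{\sum_i L_i^2}=\left\|\int_0^L f(t)\,dt\right\|\leq \int_0^L \|f(t)\|\,dt=\int_0^L \sqrt{\sum_i \|\pi_i(\gamma'(t))\|^2}\,dt=\int_0^L 1\,dt=L,
\]
which is the desired inequality. (If one prefers to avoid vector-valued integration, the same inequality follows from the scalar Minkowski inequality $\|\int g\|_{\ell^2}\leq \int \|g\|_{\ell^2}$ applied coordinate by coordinate.)

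The only mildly subtle point is ensuring that the projections inherit the right regularity from $\gamma$ so that Lemma \ref{lem:1} applies to them; everything else is a one-line application of the pointwise orthogonal decomposition plus Minkowski. So the main obstacle is essentially bookkeeping, not a genuine analytic difficulty.
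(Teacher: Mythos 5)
Your proof is correct and follows essentially the same route as the paper's: reparametrize by arclength via Lemma \ref{lem:1}, observe that the projections are Lipschitz with $\sum_i\|\gamma_i'(t)\|^2=\|\gamma'(t)\|^2=1$ for almost every $t$, and then finish with a standard integral inequality. The only (cosmetic) difference is the last step, where the paper applies Cauchy--Schwarz in the form $\bigl(\int\|\gamma_i'\|\bigr)^2\le(b-a)\int\|\gamma_i'\|^2$ and sums over $i$, while you apply Minkowski's integral inequality to the $\ell^2$-valued map $t\mapsto(\|\gamma_i'(t)\|)_i$; both close the argument in one line.
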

\begin{proof}
By Lemma \ref{lem:1} we may assume that $\gamma$ is parametrized by arclength. Then  $\gamma'$ exists and $\|\gamma'\|=1$ almost everywhere.
Let $\gamma_i$ denote the orthogonal projection of $\gamma$ into $V_i$.  Then $\gamma_i$ are also Lipschitz, so $\gamma_i'$ exist almost everywhere as well. Since $V_i$ are orthogonal and span $\R^n$, $\gamma=\sum_i\gamma_i$, which yields $\gamma'=\sum_i\gamma_i'$. Further,  $\lan\gamma'_i,\gamma'_j\ran=0$ for $i\neq j$, since $\gamma_i'\in V_i$. So 
$$
\sum_i\|\gamma_i'\|^2=\|\gamma'\|^2=1.
$$
Now the  Cauchy-Schwartz inequality  yields
$$
\sum_i(L_i)^2= \sum_i\(\int_a^b\|\gamma'_i\|\)^2
\leq (b-a)\sum_i\int_a^b\|\gamma'_i\|^2  \\
= (b-a)^2=L^2,
$$
which completes the proof.
\end{proof}

The next observation we need is a general version of a classical result which goes back to Cauchy. Originally this result was proved for smooth curves; however, it is well-known that it holds for all rectifiable curves  \cite{ayari-dubuc}. Here we simply record that the original analytic proof may be extended almost verbatim to the general case in light of Lemma \ref{lem:1}.

\begin{lem}[Cauchy-Crofton]\label{lem:cc}
Let $\gamma\colon[a,b]\to\R^2$ be a rectifiable curve, $u\in\S^1$, and $\gamma_u$ be the projection of $\gamma$ into the line spanned by $u$. Then
$$
L=\frac{1}{4}\int_{\S^1} L[\gamma_u]\,du.
$$
\end{lem}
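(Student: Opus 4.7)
The plan is to reduce to the unit-speed case and then evaluate both sides via Fubini's theorem. First, by part (3) of Lemma \ref{lem:1}, I may reparametrize $\gamma$ by arclength without changing either side of the equality (the left side because $L$ is a reparametrization invariant, the right side because each $L[\gamma_u]$ is as well). So assume $\gamma\colon[a,b]\to\R^2$ is parametrized by arclength, in which case it is $1$-Lipschitz, and by parts (1) and (2) of Lemma \ref{lem:1}, $\gamma'(t)$ exists with $\|\gamma'(t)\|=1$ for almost every $t\in[a,b]$.

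Next, for each fixed $u\in\S^1$, the projection $\gamma_u(t):=\l\gamma(t),u\r\,u$ is a composition of $\gamma$ with a linear $1$-Lipschitz map, so $\gamma_u$ is Lipschitz and $\gamma_u'(t)=\l\gamma'(t),u\r u$ exists for a.e.\ $t$. Hence $\|\gamma_u'(t)\|=|\l\gamma'(t),u\r|$ a.e., and applying part (1) of Lemma \ref{lem:1} to $\gamma_u$ yields
$$
L[\gamma_u]=\int_a^b|\l\gamma'(t),u\r|\,dt.
$$
The integrand $(t,u)\mapsto|\l\gamma'(t),u\r|$ is a nonnegative Borel function on $[a,b]\times\S^1$ (defined a.e.\ in $t$), so Fubini's theorem applies and gives
$$
\int_{\S^1}L[\gamma_u]\,du=\int_a^b\!\int_{\S^1}|\l\gamma'(t),u\r|\,du\,dt.
$$

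The final step is to evaluate the inner integral. For any unit vector $v\in\S^1$, rotational invariance of arc length measure on $\S^1$ gives
$$
\int_{\S^1}|\l v,u\r|\,du=\int_0^{2\pi}|\cos\theta|\,d\theta=4.
$$
Applying this with $v=\gamma'(t)$ wherever $\|\gamma'(t)\|=1$ (which is almost every $t$), the inner integral equals $4$ for a.e.\ $t$, so
$$
\int_{\S^1}L[\gamma_u]\,du=\int_a^b 4\,dt=4(b-a)=4L,
$$
where the last equality uses that $\gamma$ is unit-speed on $[a,b]$. Dividing by $4$ yields the claim.

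The only points requiring care are the measurability needed to invoke Fubini and the identification $\|\gamma_u'\|=|\l\gamma',u\r|$ almost everywhere; both are routine once Lemma \ref{lem:1} is in hand, which is exactly the role it plays here in extending the classical smooth argument to arbitrary rectifiable curves.
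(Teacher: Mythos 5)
Your proof is correct and follows essentially the same route as the paper: reparametrize by arclength via Lemma \ref{lem:1}, express $L[\gamma_u]=\int_a^b|\l\gamma'(t),u\r|\,dt$, and integrate over $\S^1$ using $\int_{\S^1}|\l v,u\r|\,du=4$ for unit $v$. The only difference is cosmetic — you make the Fubini interchange explicit, whereas the paper performs the same computation by writing $|\l\gamma'(t),u\r|=\|\gamma'(t)\|\,|\cos\theta(t)|$ and integrating in $\theta$.
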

\begin{proof}
Again, by Lemma \ref{lem:1}, we may assume that $\gamma$ is Lipschitz (after a reparametrization by arclength), which, since $\gamma_u=\langle \gamma,u\rangle u$, yields that $\gamma_u$ is Lipschitz as well. Thus
\begin{eqnarray*}
\int_{\S^1} L[\gamma_u]\,du &=& \int_{\S^1}\int_a^b \|\gamma_u'(t)\|\,dt = \int_{\S^1}\int_a^b |\lan\gamma'(t),u\ran|\,dt\\
&=& \int_{0}^{2\pi}\int_a^b \|\gamma'(t)\|\, |\cos(\theta(t))|\,dt d\theta\\
&=& 4 L,
\end{eqnarray*}
as desired.
\end{proof}

The last lemma immediately yields another classical fact, which had been originally proved for smooth curves:

\begin{cor}[Barbier]\label{cor:cauchy}
Let $\gamma\colon[a,b]\to\R^2$ be a closed rectifiable curve of width $w$. Then
$$
L\geq\pi w.
$$
\end{cor}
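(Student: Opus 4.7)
The plan is to deduce the corollary directly from the Cauchy--Crofton formula (Lemma \ref{lem:cc}) by bounding each projected length from below.

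For $u\in\S^1$, let $\gamma_u$ denote the orthogonal projection of $\gamma$ into the line spanned by $u$, and let $w_u$ be the length of the convex hull of the image of $\gamma_u$, i.e., the distance between the two lines orthogonal to $u$ that bound $\gamma$. By the definition of width, $w_u\geq w$ for every $u\in\S^1$. So it suffices to establish the pointwise bound
\begin{equation*}
L[\gamma_u]\geq 2w_u,
\end{equation*}
for then the Cauchy--Crofton formula will give
\begin{equation*}
L=\frac{1}{4}\int_{\S^1}L[\gamma_u]\,du\geq\frac{1}{4}\int_{\S^1}2w_u\,du\geq \frac{1}{4}\cdot 2w\cdot 2\pi=\pi w.
\end{equation*}

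To prove the pointwise bound, identify the line spanned by $u$ with $\R$ and write $\gamma_u(t)=\langle\gamma(t),u\rangle$, a continuous real valued function on $[a,b]$. Since $\gamma$ is closed, so is $\gamma_u$, i.e., $\gamma_u(a)=\gamma_u(b)$. Let $M$ and $m$ be the maximum and minimum of $\gamma_u$, attained at some $t_M,t_m\in[a,b]$, so that $M-m=w_u$. Traveling from $\gamma_u(a)$ to $\gamma_u(t_M)$, then to $\gamma_u(t_m)$, and then back to $\gamma_u(b)=\gamma_u(a)$ produces a total displacement along the real line of at least
\begin{equation*}
|M-\gamma_u(a)|+|M-m|+|\gamma_u(a)-m|\geq 2(M-m)=2w_u.
\end{equation*}
Since $L[\gamma_u]$ bounds the sum of lengths of any partition of $\gamma_u$, we conclude $L[\gamma_u]\geq 2w_u$, as needed.

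The main obstacle is really the justification that the projection of a closed rectifiable curve has length at least twice its diameter along the projection line; once this elementary observation is in hand, the rest is a direct computation from Cauchy--Crofton. This also clarifies the equality case: equality requires $L[\gamma_u]=2w_u=2w$ for almost every $u$, which forces $\gamma_u$ to be monotone on each half of $[a,b]$ determined by its extrema, and $w_u=w$ for almost every direction — i.e., $\gamma$ is (the boundary of) a convex body of constant width.
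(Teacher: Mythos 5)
Your proof is correct and follows the same route as the paper: apply the Cauchy--Crofton formula and bound each projected length below by $2w$. The paper states the bound $L[\gamma_u]\geq 2w$ without justification, whereas you supply the (correct) elementary argument via a three-point partition through the extrema of $\gamma_u$; the closing remarks on the equality case are a harmless bonus not needed for the statement.
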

\begin{proof}
Let $\gamma_u$ be as in Lemma \ref{lem:cc}. Then
$$
L=\frac{1}{4}\int_{\S^1} L[\gamma_u]\geq \frac{1}{4}\int_0^{2\pi} 2 w\,d\theta=\pi w.
$$
\end{proof}

\section{The Theorem of Wienholtz}\label{sec:wienholtz}

Motivated by a 1998 conjecture of Kusner and Sullivan \cite{kusner&sullivan:distortion} on diameter of space curves, Wienholtz made  the following fundamental observation in an unpublished work in 2000 \cite{wiehnoltz:parallel}; see also \cite[Sec. 8]{sullivan:2008} where the argument is outlined. Here we include a complete proof which is significantly shorter than the original, although it is  based on the same essential idea. 

\begin{thm}[Wienholtz]\label{thm:wienholtz}
For any continuous map $\gamma\colon \S^1\to\R^n$  there exists a pair of parallel hyperplanes $H_0$, $H_1\subset\R^n$, and $4$ points $t_0$, $t_1$, $t_2$,  $t_3$ cyclically arranged in $\S^1$ such that $\gamma$ lies in between $H_0$, $H_1$, while $\gamma(t_0)$, $\gamma(t_2)\in H_0$, and $\gamma(t_1)$, $\gamma(t_3)\in H_1$.
\end{thm}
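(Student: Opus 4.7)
The plan is to apply the Borsuk--Ulam theorem on the sphere of directions $\S^{n-1}$. For each $u \in \S^{n-1}$, define $f_u\colon \S^1 \to \R$ by $f_u(t) := \langle \gamma(t), u\rangle$, and set $m(u) := \min_t f_u$, $M(u) := \max_t f_u$, and $c(u) := (m(u)+M(u))/2$. The extreme parallel support hyperplanes of $\gamma$ normal to $u$ are $H_0(u) = \{x : \langle x, u\rangle = m(u)\}$ and $H_1(u) = \{x : \langle x, u\rangle = M(u)\}$, with contact sets $A(u) := f_u^{-1}(m(u))$ and $B(u) := f_u^{-1}(M(u))$ inside $\S^1$. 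Since $f_{-u} = -f_u$, the antipodal involution on $\S^{n-1}$ exchanges these: $A(-u) = B(u)$ and $c(-u) = -c(u)$. The conclusion of the theorem is equivalent to the existence of some $u$ for which $A(u)$ and $B(u)$ interlace cyclically on $\S^1$.

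The main step is to assume for contradiction that interlacing fails for every $u$, and to produce a continuous odd map $\Phi\colon \S^{n-1} \to \S^1$. Non-interlacing means $\S^1$ splits into two complementary arcs $I_-(u), I_+(u)$ with $A(u) \subset I_-(u)$ and $B(u) \subset I_+(u)$; their two common endpoints lie in the ``transition gaps'' of $\S^1 \setminus (A(u) \cup B(u))$ whose boundaries contain one point of each type. I would define $\Phi(u)$ to be a canonical distinguished point of $I_+(u)$ -- its midpoint with respect to the two transition gaps, for instance. The symmetry $A(-u) = B(u)$ swaps $I_+$ with $I_-$, so $\Phi(-u)$ is the antipode of $\Phi(u)$, making $\Phi$ odd. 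For $n \geq 3$, Borsuk--Ulam forbids continuous odd maps $\S^{n-1} \to \S^1$, producing the desired contradiction.

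The main obstacle is to make the selection of $\Phi$ genuinely continuous in $u$. The extremal sets $A(u), B(u)$ are only upper semicontinuous, and the transition gaps may collapse or merge at directions where several points of $\gamma$ simultaneously attain an extreme value. One remedy is to base the construction on the midlevel set $N(u) := f_u^{-1}(c(u))$, which varies continuously in Hausdorff distance because $c$ does, and to extract from it the canonical pair of separating points bounding $I_+(u)$. A perturbation or limiting argument from the generic case, in which $A(u), B(u)$ are singletons and continuity is automatic, can also be used to complete the construction. For $n=2$, Borsuk--Ulam imposes no obstruction (odd maps $\S^1\to \S^1$ exist), and the statement must be read in the degenerate sense permitting $t_0 = t_2$ and $t_1 = t_3$; the case of genuine interest for the remainder of the paper is $n = 3$.
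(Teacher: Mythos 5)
Your overall strategy is the same as the paper's: assume the two contact sets never interlace, use that failure to single out for each direction $u$ an arc of the parameter circle containing one contact set but not the other, and contradict Borsuk--Ulam. The gap is exactly at the point you flag as ``the main obstacle,'' and it is not a technicality but the crux of the proof. Your map $\Phi(u)$ is a pointwise canonical selection (the midpoint of $I_+(u)$ relative to the two transition gaps), and such a selection is genuinely discontinuous: if for some $u_0$ the set $B(u_0)$ consists of two well-separated points $b_1,b_2$, while $B(u)=\{b_1(u)\}$ for $u$ approaching $u_0$ from one side and $B(u)=\{b_2(u)\}$ from the other, then $\Phi$ jumps from near $b_1$ to near $b_2$ across $u_0$ (taking a value roughly halfway between them at $u_0$ itself). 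Neither proposed repair closes this. The midlevel set $N(u)=f_u^{-1}(c(u))$ is \emph{not} Hausdorff-continuous in $u$: if $\gamma$ has an arc lying exactly at level $c(u_0)$, that arc belongs to $N(u_0)$ but is far from $N(u)$ for nearby $u$, so continuity of $c$ buys nothing; and even where $N(u)$ behaves, ``the canonical pair of separating points'' extracted from it changes combinatorially as points of $N(u)$ appear and disappear. ``Perturbation from the generic case'' is likewise not an argument here, since the limit of the generic selections is precisely where the jump above occurs.

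The idea your proposal is missing is that no continuous single-point selection is needed. The paper picks an \emph{arbitrary} point $p(u)$ in the distinguished arc $I_u$ (there, the unique component of the preimage of the open slab running from $X_{-u}$ to $X_u$), proves the local stability property that $p(u)\in I_{u'}$ for all $u'$ in a neighborhood $V(u)$, and then averages: with a partition of unity $\phi_i$ subordinate to a finite subcover $\{V(u_i)\}$ it sets $f(u)=\sum_i\phi_i(u)\,p(u_i)$, a continuous map into $\R^2\supset\S^1$. Each $f(u)$ lies in $\conv(I_u)$, and the convex hulls of the disjoint arcs $I_u$ and $I_{-u}$ are disjoint subsets of the plane, so $f(u)\neq f(-u)$ for all $u$, contradicting Borsuk--Ulam for maps $\S^{n-1}\to\R^2\subset\R^{n-1}$ when $n\geq 3$ (which is also all your version can handle; your remark about the degenerate reading for $n=2$ is fair, and applies to the paper's proof as well). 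Replacing your continuous selection by this partition-of-unity averaging into the plane is what makes the argument go through.
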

\begin{proof}
For every direction $u\in\S^{n-1}$, let  $H_u$ be the support hyperplane of  $\gamma$ with outward normal $u$, and set $X_u:=\gamma^{-1}(H_u)$.  We may assume that $H_u\neq H_{-u}$  for all $u$, for otherwise there is nothing to prove. Let $S_u$ be the open slab bounded by $H_{\pm u}$, and $I_u\subset \S^1$ be a connected component of $\gamma^{-1}(S_u)$ such that the initial point of $I_u$ (with respect to some fixed orientation of $\S^1$) lies in $X_{-u}$ and its final end point lies in  $X_u$. 
If $I_u$ is not unique for some $u$,  then we are done. So suppose, towards a contradiction, that $I_u$ is unique for every $u$. Then we will construct a map $f\colon\S^{n-1}\to\R^2\subset\R^{n-1}$ such that $f(u)\neq f(-u)$ for all $u$.
This violates the Borsuk Ulam theorem, and completes the proof.

To construct $f$, 
pick a  point $p(u)\in I_u$ for each $u$. We claim that there exists an open neighborhood $V(u)$ of $u$ in $\S^{n-1}$ such that
\begin{equation}\label{eq:mu}
p(u)\in I_{u'}\text{    for all   }u'\in V(u).
\end{equation} 
Indeed, since $I_u$ is unique, the compact sets $X_u$ and $X_{-u}$ lie  in the interiors of the (oriented) segments $p(u)p(-u)$ and $p(-u)p(u)$ of $\S^1$ respectively (Figure \ref{fig:circle}(a)). Thus, since $u\mapsto H_u$ is continuous, we may choose $V(u)$ so small that $X_{u'}$ lies  in the  interior of  $p(u)p(-u)$ and $X_{-u'}$ lies  in the in interior of  $p(-u)p(u)$. Let $J_{u'}$ be the component of $\gamma^{-1}(A_{u'})$ which contains $p(u)$. Then the final boundary point of $J_{u'}$ must be in $X_{u'}$ and its initial boundary point   must be in $X_{-u'}$. Thus $J_{u'}=I_{u'}$ by the uniqueness property, which establishes \eqref{eq:mu}.

   \begin{figure}[h]
   \centering
    \begin{overpic}[height=1.25in]{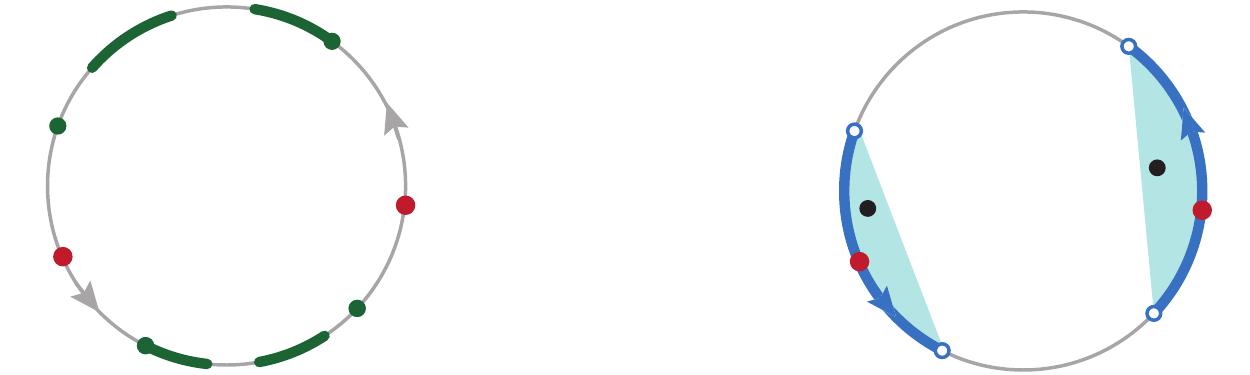} 
    \put(34,13){\small$p(u)$}
    \put(-4.5,8.25){\small$p(-u)$}
    \put(6,28.5){\small$X_u$}
    \put(24,-1){\small$X_{-u}$}
    \put(97,20){\small$I_u$}
    \put(61.5,12){\small$I_{-u}$}
     \put(85,16){\small$f(u)$}
      \put(70,12){\small$f(-u)$}
       \put(16.5,-5){\small$(a)$}
        \put(81,-5){\small$(b)$}
    \end{overpic}
   \caption{}
   \label{fig:circle}
\end{figure}

Let $u_i\in\S^{n-1}$ be a finite number of directions   such that  $V(u_i)$ cover $\S^{n-1}$, 
$\phi_i\colon\S^{n-1}\to\R$ be a partition of unity subordinate to $\{V(u_i)\}$, and set
$$
f(u):=\sum_i \phi_i(u) p(u_i).
$$
If $\phi_i(u)\neq 0$, for some $i$, then $u\in V(u_i)$, and so \eqref{eq:mu} yields that
 $
 p(u_i)\in I_{u}.
 $ 
Then, 
 $
 f(u)\in\conv (I_u),
 $
 the smallest convex set containing $I_u$. But $\conv(I_u)\cap\conv(I_{-u})=\emptyset$,  because $I_u\cap I_{-u}=\emptyset$ (Figure \ref{fig:circle}(b)).
Thus $f(u)\neq f(-u)$  as claimed.
\end{proof}

Although Wienholtz stated his theorem only for closed curves, it is easy to see that it holds for all curves:

 \begin{cor}\label{cor:genwienholtz}
 For any curve $\gamma\colon[a,b]\to\R^n$ there exist four points $t_0<t_1<t_2<t_3$ in $[a,b]$ and a pair of parallel hyperplanes $H_0$, $H_1$ in $\R^n$ such that $\gamma(t_0)$, $\gamma(t_2)\in H_0$ and  $\gamma(t_1)$, $\gamma(t_3)\in H_1$.
 \end{cor}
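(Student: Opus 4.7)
The plan is to reduce the open-curve statement to Wienholtz's theorem (Theorem \ref{thm:wienholtz}) by closing $\gamma$ up with a straight segment and transferring the resulting four points back to $\gamma$. Let $\sigma$ be the straight segment from $\gamma(b)$ to $\gamma(a)$, and form the closed curve $\tilde\gamma\colon\S^1\to\R^n$ by concatenating $\gamma$ and $\sigma$. Since $\sigma\subset\conv(\gamma)$, we have $\conv(\tilde\gamma)=\conv(\gamma)$, so $\tilde\gamma$ and $\gamma$ share the same parallel support hyperplanes. Applying Theorem \ref{thm:wienholtz} to $\tilde\gamma$ then yields parallel support hyperplanes $H_0,H_1$ of $\gamma$ together with four cyclically arranged points $s_0,s_1,s_2,s_3$ on $\S^1$ with $\tilde\gamma(s_0),\tilde\gamma(s_2)\in H_0$ and $\tilde\gamma(s_1),\tilde\gamma(s_3)\in H_1$.

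The next step is to relocate these four points into $[a,b]$. The key observation is that $\sigma$, being a straight segment contained in the closed half-space bounded by each $H_i$, meets $H_i$ at an interior point only if $\sigma\subset H_i$. Since $H_0$ and $H_1$ are parallel and distinct, and hence disjoint, at most one of them can contain $\sigma$. If neither does, then every $s_j$ that falls on $\sigma$ must correspond to an endpoint of $\sigma$, i.e., to $\gamma(a)$ or $\gamma(b)$; we replace it by $a$ or $b$, and cyclically reorder to obtain $t_0<t_1<t_2<t_3$ in $[a,b]$ with the alternating pattern preserved.

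In the remaining case $\sigma\subset H_0$, we have $\gamma(a),\gamma(b)\in H_0$, and $s_1,s_3\in[a,b]$ since $\sigma\cap H_1=\emptyset$. A cyclic connectivity argument---using that $\sigma$ is a single arc of $\tilde\gamma$ containing no point of $\gamma$ in its interior---shows that $s_0$ and $s_2$ cannot both lie in the interior of $\sigma$: otherwise the cyclic arc of $\tilde\gamma$ joining them through $\sigma$ would contain no $\gamma$-points, yet by cyclic arrangement it must contain one of the $H_1$-points $s_1$ or $s_3$. Hence at least one of $s_0,s_2$ lies in $[a,b]$; after adjoining $a$ (or $b$) as a fourth $H_0$-point one obtains a strictly ordered alternating quadruple in $[a,b]$, the strictness following from $H_0\cap H_1=\emptyset$. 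I expect this degenerate case, where the closing segment lies in one of the parallel support hyperplanes, to be the main bookkeeping obstacle; all other steps reduce to Theorem \ref{thm:wienholtz} together with elementary convex geometry.
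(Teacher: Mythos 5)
Your proposal is correct and follows essentially the same route as the paper: close $\gamma$ up with the chord from $\gamma(b)$ to $\gamma(a)$, apply Theorem \ref{thm:wienholtz}, and then handle the degenerate case where the chord lies in one of the two parallel support hyperplanes. The paper's bookkeeping in that degenerate case is slightly different (it shows that only the last of the four points can land on the chord and replaces it by $b$), but this is the same argument in substance as your observation that at most one of the two $H_0$-points can fall in the interior of the chord.
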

 \begin{proof}
 If $\gamma(a)=\gamma(b)$, we may identify $[a,b]$ with $\S^1$ and we are done by Theorem \ref{thm:wienholtz}. Further note that in this case we may assume that $t_i\in[a,b)$. If $\gamma(a)\neq\gamma(b)$,  let $\ell$ be the line segment connecting $\gamma(a)$ and $\gamma(b)$. Then we may extend $\gamma$ to a closed curve $\tilde\gamma\colon [a,b']\to\R^n$, for some $b'>b$ such that $\tilde\gamma|_{[b,b']}$ traces $\ell$. By Theorem \ref{thm:wienholtz}, there are points $t_0<t_1<t_2<t_3$ in $[a,b')$ such that  $\gamma(t_0)$, $\gamma(t_2)\in H_0$ and  $\gamma(t_1)$, $\gamma(t_3)\in H_1$. If interior of $\ell$ is disjoint from $H_0$ and $H_1$ then $t_i\not\in (b,b')$ and we are done. If interior of $\ell$ intersects $H_j$, then $\ell$ lies entirely in $H_j$. In this case suppose that $t_i\in[b,b']$. Then $\gamma([t_i,b'])$ lies in $H_j$. So it would follow that $i=3$, for otherwise $\gamma(t_i)$ and $\gamma(t_{i+1})$ would lie in the same hyperplane which is not possible. Now that $\gamma(t_3)$ lies in $H_j$, it  follows that $t_2<b$; because $\gamma([b,t_3])$ lies in $H_j$ and $\gamma(t_2)$ and $\gamma(t_3)$ cannot lie in the same hyperplane. Thus we may replace 
  $t_3$ with $b$ which concludes the proof.
 \end{proof}

\section{Estimates for Width: Proof of Theorem \ref{thm:main1}}

Using the generalized Wienholtz Theorem (Corollary \ref{cor:genwienholtz}) together with the length decomposition lemma (Lemma \ref{lem:wienholtz}), we will now  prove our main inequalities for the width:

\begin{proof}[Proof of Theorem \ref{thm:main1}]
Let $H_0$, $H_1$ be a pair of parallel bounding planes of $\gamma$ as in the generalization of Wienholtz's theorem, Corollary \ref{cor:genwienholtz}. Further let   $\gamma_1$ be the projection of $\gamma$ into a line orthogonal to $H_0$, $\gamma_2$ be the projection of $\gamma$ into $H_0$, and $L_i$, $w_i$ denote the length and width of $\gamma_i$ respectively.Then 
$$
L_1\geq L[\gamma_1|_{[t_0,t_1]}]+L[\gamma_1|_{[t_1,t_2]}]+L[\gamma_1|_{[t_2,t_3]}]\geq3w.
$$
 Further recall that, for planar curves,  $L/w$ is minimized for Zalgaller's caliper curve, where this quantity is bigger than  $2.2782$ \cite{adhikari&pitman}. Thus
 $$
L_2\geq  2.2782 \,w_2 \geq  2.2782 \,w,
 $$
So, by Lemma \ref{lem:wienholtz}, 
$$
L\geq\sqrt{L_1^2+L_2^2}\geq\sqrt{(3w)^2+(2.2782 w)^2}\geq 3.7669\, w,
$$
which establishes \eqref{eq:0}. Next, to prove \eqref{eq:1}, suppose that $\gamma$ is closed. 
Then 
$
L_1\geq  4w.
$
 Further, it follows from Corollary \ref{cor:cauchy} that $L_2\geq \pi w_2\geq \pi w$. Thus, again by Lemma \ref{lem:wienholtz}, 
$$
L\geq\sqrt{L_1^2+L_2^2}\geq\sqrt{(4w)^2+(\pi w)^2},
$$
which completes the proof.
\end{proof}

\begin{note}[The case of equality in \eqref{eq:1}]
As we discussed above, when $\gamma$ is closed, $L_1\geq  4w$ and $L_2\geq \pi w$. Thus the last displayed expression in the above proof shows that equality holds in \eqref{eq:1} only if
$L_1=  4w$ and $L_2= \pi w$. It follows then that $\gamma_2$ is a curve of constant width, and $\gamma$ is composed of four geodesic segments in the cylindrical surface over $\gamma_2$. Since optimal objects in nature are usually symmetric, and $\gamma$ has four segments running between $H_0$ and $H_1$, it would be reasonable to expect that the minimal curve $\gamma$ would be symmetric with respect to a pair of orthogonal planes parallel to $u$. This would in turn imply that $\gamma_2$ is centrally symmetric. The only centrally symmetric curves of constant width, however, are circles. Thus if the equality in \eqref{eq:1} is achieved by a symmetric curve, then $\gamma_2$ should be a circle. As we show in the next section, however, the equality in \eqref{eq:1} never holds for curves which project onto a circles. Thus either \eqref{eq:1} is not quite sharp or else the minimal curve is not so symmetric. On the other hand, to add to the mystery, we will produce a symmetric curve in the next section which very nearly realizes the case of equality in \eqref{eq:1}.
\end{note}

\section{A Near Minimizer for $L/w$}\label{sec:example}
Here we construct a symmetric piecewise geodesic closed curve on a circular cylinder which shows that \eqref{eq:1} is very nearly sharp. To this end let
 $C_h$ be the cylinder of radius $1$ and height $h$  in $\R^3$ given by 
$$
x^2+y^2=1,\quad\quad -h/2\leq z\leq h/2.
$$
 Consider the $4$ consecutive points 
 $$
 p_1=(1,0,h/2), \;p_2=(0,1,-h/2), \;p_3=(-1,0,h/2), \;p_4=(0,-1,-h/2)
 $$ 
 on the boundary of $C_h$. Let $\Gamma_h$ be the simple closed curve consisting of $4$ geodesic or helical segments connecting these points cyclically, see Figure \ref{fig:cylinder}. Note that, as the figure shows, $\Gamma_h$ may also be constructed by rolling a planar polygonal curve onto the cylinder. Let $L(h)$ and $w(h)$ denote the length and width of $\Gamma_h$ respectively. We will show that $L(h)/w(h)$ is minimized when the height of $C_h$ is slightly smaller than its diameter, or more precisely $h=h_0\approx 1.97079$. Then 
 $L(h_0)/w(h_0)< 5.1151$, and thus $\Gamma_{h_0}$ yields the curve which we mentioned in the introduction.
 
 \begin{figure}[h]
   \centering
    \begin{overpic}[height=1.25in]{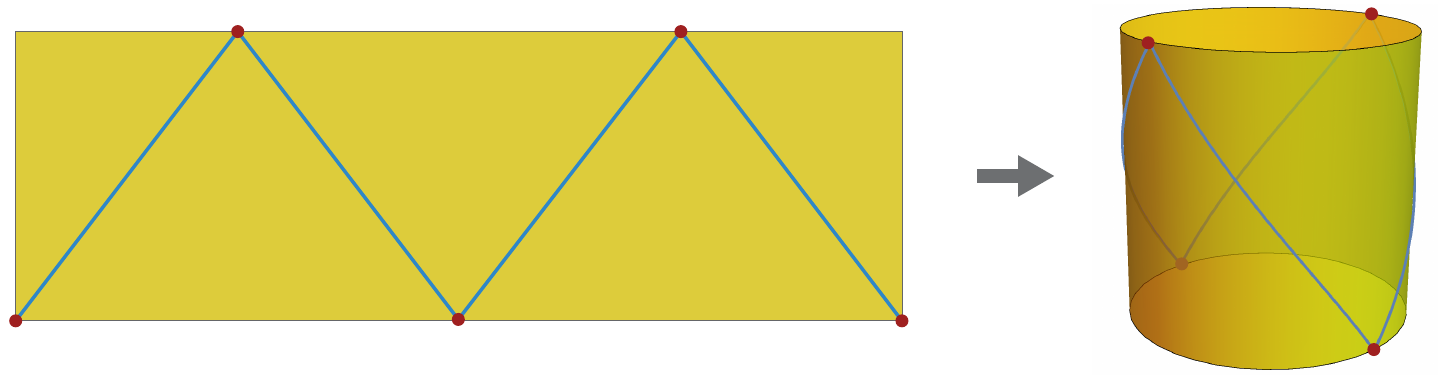} 
     \put(30.5,1){\small $2\pi$}
     \put(-2,13){\small $h$}
     \put(100,13){\small $\Gamma_h$}
     \put(95,26.5){\small $p_1$}
      \put(82,5.5){\small $p_2$}
       \put(80,24.5){\small $p_3$}
       \put(95,-0.5){\small $p_4$}
    \end{overpic}
   \caption{}
   \label{fig:cylinder}
\end{figure}

To compute $h_0$,  first note that, by the Pythagorean theorem,
$$
L(h)=\sqrt{(4h)^2+(2\pi)^2}.
$$
Next, to find $w(h)$, let $\ol\Gamma_h$ be the projection of $\Gamma_h$ into the $xz$-plane, see Figure \ref{fig:graphs}, and let $\ol w(h)$ denote the width of $\ol\Gamma_h$, i.e., the infimum of the distance between all pairs of parallel lines in the plane of $\ol\Gamma_h$ which contain $\ol\Gamma_h$. Now we record the following lemma, whose proof we will postpone to the end of this section.

\begin{lem}\label{lem:widthprojection}
The width of $\Gamma_h$ is equal to the width of $\ol\Gamma_h$:
$$
w(h)=\ol w(h).
$$
\end{lem}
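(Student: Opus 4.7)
The plan is to prove the two inequalities $w(h) \leq \ol w(h)$ and $w(h) \geq \ol w(h)$ separately. The first is immediate: any pair of parallel lines in the $xz$-plane bounding $\ol\Gamma_h$ at distance $d$ lifts, via translation along the $y$-axis, to a pair of parallel planes in $\R^3$ at the same distance bounding $\Gamma_h$; taking $d \to \ol w(h)$ gives $w(h) \leq \ol w(h)$.

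For the reverse inequality I fix a unit vector $v = (a,b,c) \in \S^2$ and show that the width $w_v$ of $\Gamma_h$ in direction $v$ satisfies $w_v \geq \ol w(h)$. When $b = 0$, the two parallel support planes of $\Gamma_h$ with normal $v$ are parallel to the $y$-axis, so they meet the $xz$-plane in a pair of parallel lines bounding $\ol\Gamma_h$ at distance $w_v$, giving the bound at once. When $b \neq 0$, I exploit the reflection symmetry $\sigma\colon (x,y,z)\mapsto(x,-y,z)$ of $\Gamma_h$ (easily verified on each of the four helical segments): the reflected support planes support $\Gamma_h$ in direction $\sigma v = (a,-b,c)$ with the same separation $w_v$. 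Intersecting the upper (resp.\ lower) planes of the two pairs yields two parallel lines $\{ax+cz = d^\pm\}$ in the $xz$-plane, and these bound $\ol\Gamma_h$ since any point of $\Gamma_h$ satisfies both the original and the $\sigma$-reflected support inequalities and hence their average. The distance between these lines is $w_v/\sqrt{a^2+c^2}$, giving the symmetry bound
\[
w_v \geq \ol w(h)\sqrt{a^2+c^2}.
\]

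The symmetry bound is strictly weaker than $\ol w(h)$ once $b \neq 0$, and this is the main obstacle. To close the gap I supplement it with explicit chord bounds coming from points of $\Gamma_h$: the diagonals $p_1 p_3 = (2,0,0)$ and $p_2 p_4 = (0,2,0)$ give $w_v \geq 2|a|$ and $w_v \geq 2|b|$; the chord $p_1 p_4 = (1,1,h)$ together with its sign variants yields $w_v \geq |a|+|b|+h|c|$; and the four midpoints $(\pm 1/\sqrt 2, \pm 1/\sqrt 2, 0)$ of the helical arcs, which lie on $\Gamma_h$, give the further bound $w_v \geq \sqrt 2(|a|+|b|)$. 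A case analysis on $(a,b,c) \in \S^2$ then shows that for every direction at least one of these lower bounds already reaches $\ol w(h)$: large $|a|$ or $|b|$ is caught by the diagonal bounds; large $|c|$ (forced when $|a|$ and $|b|$ are both small) is caught by the bound $|a|+|b|+h|c|$; and the intermediate regime is caught by the midpoint bound $\sqrt 2(|a|+|b|)$ in conjunction with the symmetry bound. The verification of this three-region decomposition of $\S^2$, against the explicit form of $\ol w(h)$ governed by the altitudes of the planar triangle with vertices $(\pm 1, h/2)$ and $(0,-h/2)$, is the technical heart of the argument.
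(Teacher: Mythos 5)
Your easy inequality $w(h)\le \ol w(h)$, your symmetry bound $w_v\ge \ol w(h)\sqrt{a^2+c^2}$, and the individual chord bounds are all correct. The genuine gap is in the step you defer as ``the technical heart'': the claimed covering of $\S^2$ by regions where one of your lower bounds reaches $\ol w(h)$ does not exist, and it fails precisely at the value of $h$ the paper needs. Take $h=h_0\approx 1.9708$, so that $\ol w(h_0)=h_0$, and the unit direction $v=(a,b,c)$ with $a=b=0.69$ and $c=\sqrt{1-2(0.69)^2}\approx 0.2186$. Then $2|a|=2|b|=1.38$, $|a|+|b|+h_0|c|\approx 1.811$, $\sqrt2\,(|a|+|b|)\approx 1.952$, and the symmetry bound gives $h_0\sqrt{a^2+c^2}\approx 1.426$: every bound on your list falls short of $\ol w(h_0)\approx 1.971$. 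The lemma is nevertheless true in this direction --- the extrema of $t\mapsto\langle\Gamma_{h_0}(t),v\rangle$ occur at interior points of the helical arcs (at angle roughly $29^\circ$, not $45^\circ$), and the actual width there is about $2.03$ --- but no finite list of chords through the vertices and arc midpoints certifies this, so the argument as written cannot be completed.

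The missing idea is the one the paper uses: rather than estimating the width direction by direction, analyze where a width-realizing pair of support planes can touch the convex hull $C$ of $\Gamma_h$. The set $\partial C\setminus\Gamma_h$ is ruled by segments parallel to the $x$- and $y$-axes, so either a support plane contains such a ruling --- in which case the pair is parallel to a coordinate axis and the problem reduces to the planar projection $\ol\Gamma_h$ --- or each plane meets $C$ in a single point of $\Gamma_h$ and the segment joining the two contact points is a common perpendicular; a short case analysis (both contact points interior to branches, giving $w(h)=2$, or one contact point a vertex $p_i$, giving $w(h)\ge\dist(p_1,p_2p_3p_4)=d(h)$) finishes the proof. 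If you want to keep your direction-by-direction strategy, you would have to compute $\max_t\langle\Gamma_h(t),v\rangle-\min_t\langle\Gamma_h(t),v\rangle$ exactly on the four helices as a function of $v$, which is a different and much heavier computation than the one you sketch.
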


To compute $\ol w(h)$, note that there are two possibilities: (i) $\ol w(h)=h$, or (ii) $\ol w(h)$ is given by the distance $d(h)$ between one of the end points of $\ol\Gamma_h$ and the opposite branch of $\ol\Gamma_h$. So we have
$$
\ol w(h)=\min\{h, d(h)\}.
$$

\begin{figure}[h]
   \centering
    \begin{overpic}[height=1.25in]{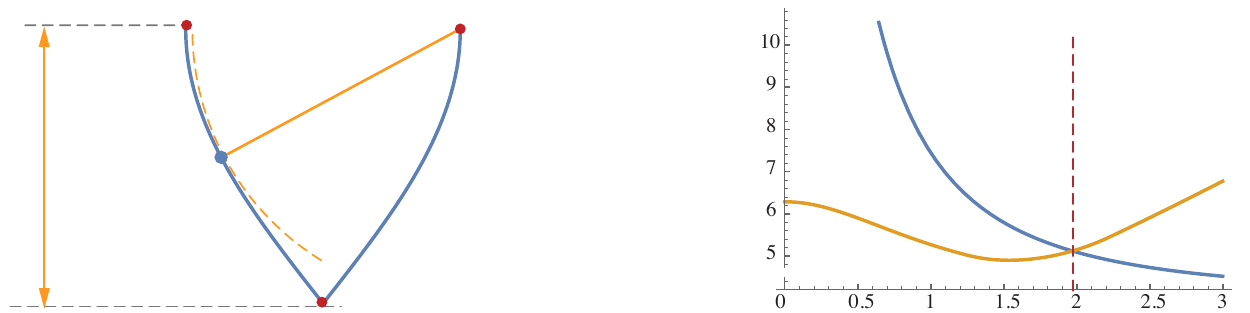} 
    \put(71,23){\small $\frac{L(h)}{h}$}
    \put(98,11){\small $\frac{L(h)}{d(h)}$}
    \put(100,0.5){\Small $h$}
     \put(36,11){\small $\ol\Gamma_h$}
     \put(1,11){\small $h$}
     \put(22.5,19){\small $d(h)$}
      \put(36,24.5){\small $p_1$}
      \put(14,24.5){\small $p_3$}
     \put(86.5,21){\small $h=h_0$}
    \end{overpic}
   \caption{}
   \label{fig:graphs}
\end{figure}

To find $d(h)$ note that the left branch of $\ol\Gamma_h$ may be parametrized by 
$$
(-\cos(t), 0, (1/2+{2t}/{\pi})h),\quad\quad-\pi/2\leq t\leq 0,
$$ and the tip of the right branch is $p_1=(1,0,h/2)$. Thus
$$
d(h)=\min_{-\pi/2\leq t\leq 0}\sqrt{(\cos(t)+1)^2+((2t/\pi)h)^2}.
$$
Finally we have
$$
\frac{L(h)}{w(h)}=\frac{L(h)}{\ol w(h)}=\max\left\{\frac{L(h)}{h}, \frac{L(h)}{d(h)}\right\}.
$$
Graphing these functions shows that $L(h)/w(h)$ is minimized when $\frac{L(h)}{h}=\frac{L(h)}{d(h)}$ or $h=d(h)$; see  Figure \ref{fig:graphs}.

Now let $h_0$ denote the solution to $h=d(h)$. Via a computer algebra system, we can find that $1.97078<h_0<1.97080$. Consequently
$$
\min\(\frac{L(h)}{w(h)}\)=\frac{L(h_0)}{w(h_0)} =\frac{\sqrt{(4h_0)^2+(2\pi)^2}}{h_0}<\frac{\sqrt{(4\times1.97080)^2+(2\pi)^2}}{1.97078}< 5.1151.
$$
as desired.

It only remains now to prove the last lemma. To this end we need to consider the boundary structure of the convex hull $C$ of $\Gamma_h$. Note that $\Gamma_h$ lies on the boundary $\partial C$ of $C$, and divides $\partial C$ into a pair of regions by the Jordan curve theorem; see Figure \ref{fig:ch}. Further, each of these regions is a ruled surface. In one of the regions all the rulings are parallel to $p_1p_3$, or the $x$-axis, while in the other region the rulings are parallel to $p_2p_4$ or the $y$-axis. Each of these regions can be subdivided into a pair of triangular regions by the lines $p_1p_3$ and $p_2p_4$. Thus we may say that $\partial C$ carries a tetrahedral structure, and call these subregions the faces of $\partial C$. For instance, the face of $\partial C$ which is opposite to $p_1$ is $p_2p_3p_4$.

\begin{proof}[Proof of Lemma \ref{lem:widthprojection}]
By definition of width, we have $w(h)\leq\ol w(h)$. So we just need to establish the reverse inequality. To this end, let $H$, $H'$ be a pair of parallel planes, with separation distance $w(h)$, which contain $\Gamma_h$ in between them. First suppose that  $H$ (or $H'$) intersects the convex hull $C$ of $\Gamma_h$ at more than one point. Then, since $C$ is convex, and $H$ is a support plane of $C$, $H$ must contain a line segment in $\partial C$, the boundary of $C$. All line segments in $\partial C$ are parallel to either the $x$-axis or the $y$-axis, as we discussed above. Thus $H$, and consequently $H'$ must be parallel to, say, the $y$-axis. Consequently, if we let $\ell$, $\ell'$ be the intersections of $H$, $H'$ with the $xz$-plane, it follows that $\ol\Gamma_h$ is contained between $\ell$ and $\ell'$, which are separated by the distance $w(h)$. Thus $\ol w(h)\leq w(h)$, as desired.

\begin{figure}[h]
   \centering
    \begin{overpic}[height=1.55in]{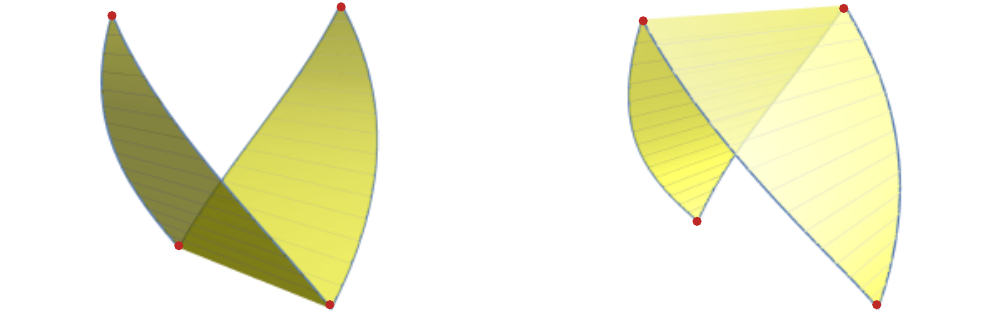} 
      \put(10,31){$p_3$}
      \put(33,32){$p_1$}
      \put(16,5){$p_2$}
      \put(33,-1){$p_4$}
      \put(63,31){$p_3$}
        \put(85,32){$p_1$}
      \put(69,7){$p_2$}
      \put(86,-1){$p_4$}
     \end{overpic}
   \caption{}
   \label{fig:ch}
\end{figure}

We may suppose then that $H$ and $H'$ intersect $C$ at precisely one point each, which we call $p$ and $p'$ respectively. Then the line segment $pp'$ must be orthogonal to $H$ and $H'$ (e.g., see \cite[p. 86]{schneider:book}). Further, if $H\cap C$ and $H'\cap C$ are singletons, then $H$ and $H'$ can intersect $C$ only along $\Gamma_h$, because $\partial C-\Gamma_h$ is fibrated by line segments. Now suppose that both  $p$ and $p'$ belong to the interior of branches of $\Gamma_h$, i.e., the complement of $p_i$. Then  $pp'$ must be orthogonal to $\Gamma_h$ at both ends. This may happen only when $p$ and $p'$ belong to a pair of opposite branches of $\Gamma_h$, and $pp'$ is parallel to the $xy$ plane. It follows then that $\|pp'\|=2$, which yields $w(h)=2$. On the other hand $\ol w(h)\leq 2$, since the distance between the end points of $\ol\Gamma_h$ is $2$. Thus again we obtain $\ol w(h)\leq w(h)$.

It only remains then to consider the case where $p$ is an end point of a branch of $\Gamma_h$, say $p=p_1$. In this case $p'$  must belong to one of the branches of $\Gamma_h$ which is not adjacent to $p_1$, i.e., either  $p_2p_3$ or $p_3p_4$. So $p'$ must belong to face or the triangular region $p_2p_3p_4$ in $\partial C$. Consequently
$$
w(h)\geq \dist(p_1, p_2p_3p_4)=d(h)\geq\ol w(h),
$$
which completes the proof. Here $d(h)$ is the distance between $p_1$ and the opposite branch of $\ol\Gamma_h$, as we had discussed above.
\end{proof}

\begin{note}
Although the curve $\Gamma_{h_0}$ we constructed above is the minimizer for the width problem among  curves on a circular cylinder, it is not
the minimizer for the width problem among all closed curves. Indeed we may replace small segments of $\Gamma_{h_0}$ which have an end point at $p_i$ with straight line segments without decreasing the width. 
\end{note}

\section{Zalgaller's $L_5$ Curve}
In \cite{zalgaller:1994} Zalgaller describes a closed space curve, ``$L_5$", which he claims minimizes the ratio $L/w$. Here we show that this conjecture is not true. Indeed, the ratio $L/w$ for Zalgaller's curve, which here we call $Z$,  is bigger than that of the cylindrical curve $\Gamma_{h_0}$ which we constructed in Section \ref{sec:example}. Since Zalgaller does not calculate $L/w$ for this example, we include this calculation below. We will begin by describing the construction of $Z$, since Zalgaller's paper is not available in English.

The curve $Z$ is modeled on a regular tetrahedron. Note that the width of a regular tetrahedron is the distance between any pairs of its opposite edges. In particular this distance is $1$ when the side lengths are $\sqrt 2$. The  basic idea  for constructing $Z$ is to take a simple closed curve, which traces $4$ consecutive edges of a tetrahedron, say of edge length $\sqrt{2}$, and reduce its length without reducing its width. The error in Zalgaller's construction, however, is that the width does go down below $1$, as we will show below. 

\subsection{Construction}

\begin{figure}[h]
   \centering
    \begin{overpic}[height=1.75in]{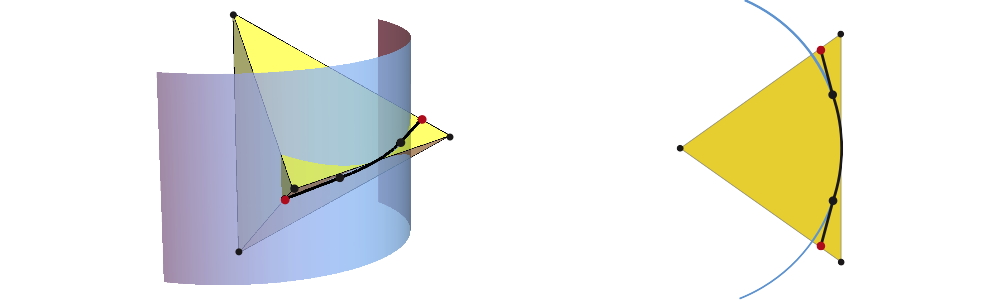} 
      \put(22,29.5){\Small $C$}
      \put(22.5,2.5){\Small $D$}
      \put(28.75,12.5){\Small $B$}
      \put(45.5,15.5){\Small $A$}
      \put(42,19.5){\Small $A'$}
      \put(27,7.5){\Small $B'$}
       \put(33.5,10){\Small $F$}
       \put(38.5,16.5){\Small $E$}
      \put(65.25,15.75){\Small $\ol C$}
      \put(65,13.25){\Small $\ol D$}
      \put(84.5,27.5){\Small $\ol A$}
      \put(84,1.25){\Small $\ol B$}
       \put(80.5,26){\Small $\ol{A'}$}
       \put(80,2.75){\Small $\ol{B'}$}
       \put(84.5,20){\Small $\ol E$}
       \put(84.5,9){\Small $\ol F$}
        \end{overpic}
   \caption{}
   \label{fig:zalgaller}
\end{figure}

Take a regular tetrahedron $T$ with vertices $A$, $B$, $C$, $D$, as shown in Figure \ref{fig:zalgaller}. Assume that the edge lengths are $\sqrt 2$ so that the width of $T$ is $1$, i.e., the distance between the edges $AB$ and $CD$. Let $A'$ be the point on $AC$ whose distance from the face $BCD$ is $1$. A simple computation shows that $A'$ is the point on $AC$ whose distance from $C$ is $\sqrt 6/2$. Similarly, let $B'$ be the point on $BD$ whose distance from $D$ is $\sqrt 6/2$. Let $X$ be the cylinder of radius $1$ with axis $CD$. Now connect $A'$ and $B'$ with the shortest arc  which lies outside $X$. Note that this arc is composed of a pair of straight line segments, plus a helical segment which lies on $X$. This forms the side $A'B'$ of $Z$. Similarly, we can form the sides $B'C'$, $C'D'$ and $D'A'$ which will yield the whole curve as shown in Figure \ref{fig:tetrahedra}.

\subsection{Length}\label{subsec:lengthL5}
To compute the length of $Z$, we are going to assume that $A=(1,\sqrt2/2, 0)$, $B=(1, -\sqrt2/2,0)$, $C=(0,0, \sqrt2/2)$, and $D=(0,0,-\sqrt2/2)$. For any set $S\subset\R^3$, let $\ol S$ denote its projection into the $xy$-plane, and note that $\ol C$ and $\ol D$  coincide with the origin $o$ of the $xy$-plane.  So the cylinder $X$ will intersect the $xy$-plane in a circle of radius $1$ centered at $o$; see the right diagram in Figure \ref{fig:zalgaller}. Let $\ol{A'B'}$ denote the projection of $A'B'$ into the $xy$-plane, and $h$ be the distance of $A'$ or $B'$ from the $xy$-plane. Then
$$
L(A'B')=\sqrt{L(\ol{A'B'})^2+(2h)^2}.
$$
The reason behind the above equality is that $A'B'$ lies on the cylindrical surface over $\ol{A'B'}$, and is a geodesic in that surface (which has zero curvature); thus, the Pythagorean theorem applies. Next,  note that
$$
A'=\(1-\frac{\sqrt3}{2}\)C+\frac{\sqrt 3}{2}A, =\(\frac{\sqrt3}{2},\frac{\sqrt3}{2\sqrt2},\frac{2-\sqrt3}{2\sqrt2}\).
$$
Thus
$$
\ol{A'}=\(\frac{\sqrt3}{2},\frac{\sqrt3}{2\sqrt2}\),\quad\text{and}\quad h=\frac{2-\sqrt3}{2\sqrt2}.
$$
Next, to compute $L(\ol{A'B'})$, note that $\ol{A'B'}=\ol{A'E}\cup \ol{EF} \cup \ol{FB'}$, where $A'E$ and $FB'$ are line segment, and $EF$ is a circular arc. To find $L(\ol{EF})$, write $\ol E=(\cos(\theta),\sin(\theta))$, and note that   $\langle \ol E-\ol A',\ol E\rangle=0$, which yields that $\theta=\tan^{-1}(\sqrt 5/2)$. So $L(\ol{EF})=2\arctan(\sqrt 5/2)$. Further, it follows that $\ol E=(5,\sqrt2)/(3\sqrt3)$, which in turn allows us to compute that $L(\ol{A'E})=\sqrt2/4$. So we conclude that
$$
L(\ol{A'B'})=2L(\ol{A'E})+L(\ol{EF})=\frac{1}{\sqrt2}+2\tan^{-1}\(\frac{\sqrt5}{2}\),
$$
which in turn yields
$$
L(Z)=4L(A'B')=4\sqrt{\left(\frac{1}{\sqrt{2}}+2 \tan
   ^{-1}\left(\frac{\sqrt{2}}{5}\right)\right)^2+\left(\frac{2-\sqrt{3}}{\sqrt2}\right)^2}\approx 5.0903
$$

\begin{figure}[h]
   \centering
    \begin{overpic}[height=1.25in]{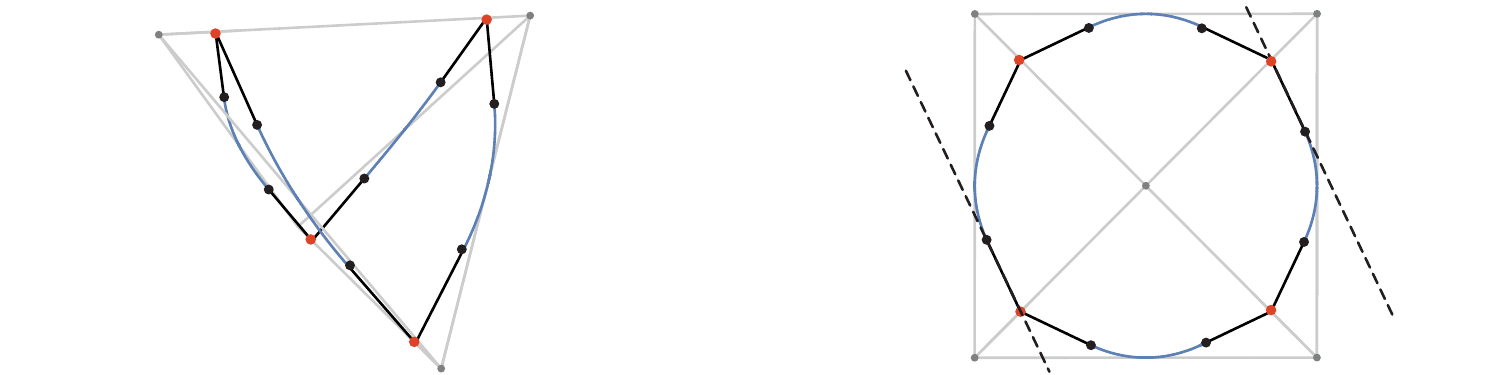} 
      \put(35.75,23.5){\Small $A$}
      \put(34,17.5){\Small $E$}
       \put(14,11){\Small $E'$}
      \put(28.5,-2){\Small $B$}
      \put(8,22){\Small $C$}
      \put(17,7.5){\Small $D$}
      \put(62,23.5){\Small $\tilde D$}
       \put(62,7.5){\Small $\tilde E'$}
      \put(88.25,23.5){\Small $\tilde A$}
       \put(88.25,15.75){\Small $\tilde E$}
        \put(75,9){\Small $\tilde M$}
      \put(62,0){\Small $\tilde C$}
      \put(88,0){\Small $\tilde B$}
        \end{overpic}
   \caption{}
   \label{fig:tetrahedra}
\end{figure}

\subsection{Width}
To estimate the width of $Z$ we are going to project it into a plane $\Pi$ orthogonal to  
$$
u:=\frac{A+C}{2}-\frac{B+D}{2}=\(0,\frac{1}{\sqrt{2}}, \frac{1}{\sqrt{2}}\);
$$ see the right diagram in Figure \ref{fig:tetrahedra}. For any set $S\subset\R^3$, we let $\tilde S$ denote its projection into $\Pi$. Let $E'$ be the point on the segment $DC$ of $Z$ which lies at the end of the line segment in $DC$ starting at $D$. Further let $M$ denote the center of mass of the tetrahedron $T$. Note that
$$
w(Z)\leq w(\tilde Z)\leq \|\tilde{EE'}\|=2\|\tilde{EM}\|.
$$
 The first inequality above is obvious from the definition of $w$; the second inequality follows from the fact that $\tilde Z$ is contained in between the lines spanned by $\tilde{A'E}$ and $\tilde{C'E'}$; and the last equality of course is due to the fact that $\tilde M$ is the midpoint of $\tilde{EE'}$. It only remains then to compute $\|\tilde{EM}\|$. 
 To this end first note that 
 $$
 M=(A+B+C+D)/4=(1/2,0,0).
 $$
  Next, to compute $E$, recall that we already computed its first two components given by $\ol E=(5,\sqrt2)/(3\sqrt3)$. To find the third component of $E$ recall that $A'B'$ is a linear graph over its projection $\ol{A'B'}$. More specifically, the height of this graph is given by $z(t)=\frac{h}{L/8}\;t$, where $t$ measures the distance from the center of $\ol{A'B'}$. Thus
 $$
 E=\(\frac{5}{3\sqrt3},\frac{\sqrt2}{3\sqrt3}, \frac{h}{L/8}\tan^{-1}(\frac{\sqrt5}{2})\).
 $$
 Finally recall that $\tilde E=E-\langle E,u\rangle u$ and $\tilde M=M-\lan M,u\ran u$. So we now have all the information to compute that
 $$
w(Z)\leq 2 \|\tilde{EM}\|=2\|\tilde E-\tilde M\|\approx 0.980582.
 $$
 In particular, the computation contradicts Zalgaller's conjecture that $Z$ has width $1$. Using these computations, we now have
 $$
 \frac{L(Z)}{w(Z)}\geq 5.1911,
 $$
 which is bigger than $5.1151$, the ratio $L/w$ for the cylindrical curve we constructed in the last section. Thus $Z$ does not minimize $L/w$, contrary to Zalgaller's conjecture.

 \begin{note}[Original Statement of the $L_5$ Conjecture]\label{note:L5}
 Since Zalgaller's paper \cite{zalgaller:1994} is not available in English, here we include a translation of the conjecture on the shortest closed curve of width $1$, which we disproved above.

``16. A similar problem can be posed for closed curves. On the plane
any curve of constant width $1$ is the shortest closed curve of width $1$.
In space consider the regular tetrahedron with edge $\sqrt{2}$ (fig \ref{fig:zalgallerorig}).
The $4$-segment polygonal curve $L_4=ABCDA$ is an example of a closed curve of width $1$.
 Mark the middle points $O_1$, $O_2$, $O_3$, $O_4$
on the edges of $L_4$. On the edge $AC$
which is not in $L_4$ mark the point $A^\prime$ which is at distance $1$ from
the plane $BCD$ and also mark a point $C^\prime$ which is at distance $1$
from the plane $ABD$. Similarly, on the edge $BD$ which is also not in 
$L_4$ mark points $B^\prime$, $D^\prime$ which are at distance $1$ 
from the planes $ACD$, $ABC$, respectively.
\begin{figure}[h]
 \centering
 \begin{overpic}[height=1.25in]{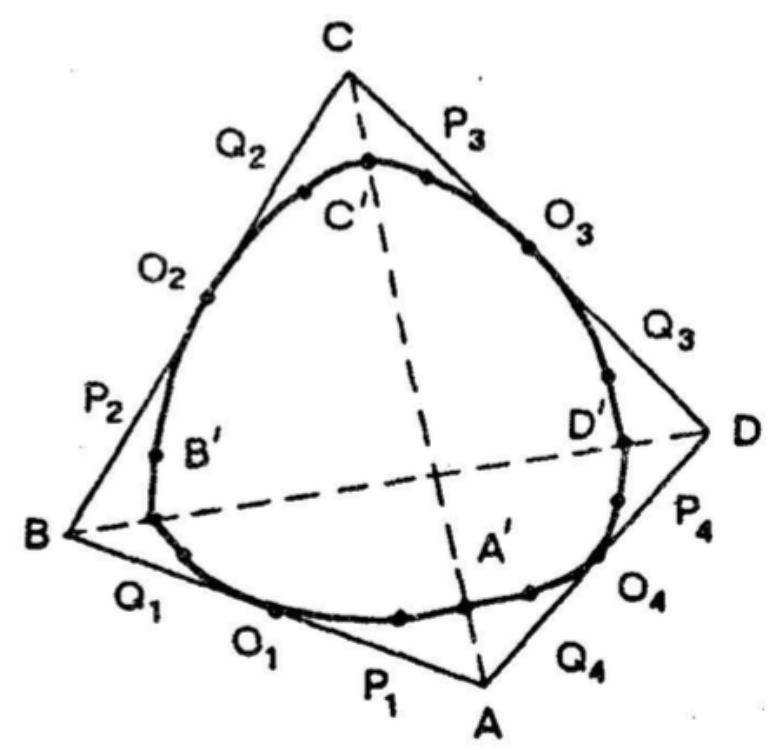} 
          \end{overpic}
  \caption{}
  \label{fig:zalgallerorig}
\end{figure}
Form the curve $L_5$ from four congruent  $C^1$-smooth portions
$A^\prime B^\prime$, $C^\prime D^\prime$, $D^\prime A^\prime$.
It is enough to describe the portion $A^\prime B^\prime$.
We construct it
as the shortest curve $A^\prime P_1 O_1 Q_1 B^\prime$ that joins 
$A^\prime$ and $B^\prime$ and goes around outside
the circular cylinder $Z$ of radius $1$ with axis $CD$.
This portion is of the form 
$A^\prime B^\prime=A^\prime P_1 + P_1 O_1 Q_1 + Q_1 B^\prime$
where $A^\prime P_1$ and $Q_1 B^\prime$ are straight line
segments and $P_1 O_1 Q_1$ is a screw-rotational arc on the cylinder $Z$.
Similarly, one constructs the portions 
$B^\prime C^\prime=B^\prime P_2 O_2 Q_2 C^\prime$,
$C^\prime D^\prime=C^\prime P_3 O_3 Q_3 D^\prime$,
$D^\prime A^\prime=D^\prime P_4 O_4 Q_4 A^\prime$.

17. Conjecture 2. The curve $L_5$ has width $1$ and is the shortest
closed space curve of width $1$."
\end{note}

\begin{note}
 If Zalgaller had been correct in his conjecture that the width of $L_5$ is $1$, then the ratio $L/w$ for this curve would have been approximately $5.0903$ according to our computation of length  in Section \ref{subsec:lengthL5}. In this sense, the $L_5$ conjecture predicted that $L/w\geq 5.0903$, which interestingly enough is within $0.01$\% of the lower bound \eqref{eq:1} in Theorem \ref{thm:main1}. 
 \end{note}

 \section{Estimates for Inradius: Proof of Theorem \ref{thm:main2}} \label{sec:main2}
 
  The general estimate in Theorem \ref{thm:main2} follows quickly from the Wienholtz theorem as was the case for the estimates for the width in the proof of Theorem \ref{thm:main1}. For the case of closed curves, however, we will work harder to obtain a better estimate via the notion of horizon developed below. 
   
 \subsection{The general case}
 Here we prove \eqref{eq:2}.
 Let $\gamma_i$ be as in the proof of Theorem \ref{thm:main1}, and $L_i$, $r_i$ denote the length, and inradius of the convex hull of $\gamma_i$ respectively. Then 
 $$
 L_1\geq 3w\geq 6r.
 $$ 
 Further note that $r_2$ is not smaller than the inradius of the convex hull of $\gamma$, which in turn is not smaller than $r$. Thus
 $$
 L_2\geq (2+\pi)r_2 \geq (2+\pi)r
 $$
 by the theorem of Joris \cite{joris}. Consequently
$$
L\geq\sqrt{L_1^2+L_2^2}\geq \sqrt{(6r)^2+((2+\pi)r)^2}>7.90164\,r,
$$
which establishes \eqref{eq:2}.

 \subsection{The horizon}
 Here we develop some integral formulas needed to prove \eqref{eq:3}. For a point $x$ outside $\S^2$ consider the cone generated by all rays which emanate from $x$ and are tangent to $\S^2$. This cone touches $\S^2$ along a circle which we call the horizon of $x$. The \emph{horizon} of a curve $\gamma$, which we denote by $H(\gamma)$, is defined as the total area, counted with multiplicity, covered by horizons of all points of $\gamma$. Note that a point $p$ of $\S^2$ belongs to $H(\gamma)$ if and only if the tangent plane $T_p\S^2$ intersects $\gamma$. Thus
$$
H(\gamma):=\int_{p\in\S^2} \#\big(\gamma^{-1}(T_p\S^2)\big)\, dp.
$$
The closedness of $\gamma$ together with a bit of convexity theory, quickly  yields the following lower bound for the horizon. Recall that we say a curve $\gamma\colon[a,b]\to\R^3$ inspects the sphere $\S^2$, or is an inspections curve provided that it lies outside $\S^2$ and $\S^2$ lies in its convex hull.

\begin{lem}\label{lem:8pi}
If $\gamma$ is a closed inspection curve of $\S^2$, then 
\begin{equation}\label{eq:8pi}
8\pi\leq H(\gamma).
\end{equation}
\end{lem}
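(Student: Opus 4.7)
The plan is to show that for almost every $u\in\S^2$ the tangent plane $T_u\S^2$ meets $\gamma$ in at least two preimages; integrating this pointwise bound over $\S^2$ will then give $H(\gamma)\geq 2\cdot 4\pi=8\pi$. Concretely, setting $f_u(t):=\langle\gamma(t),u\rangle$, a continuous real-valued function on the closed loop $\S^1$, we have $\#\gamma^{-1}(T_u\S^2)=|f_u^{-1}(1)|$. The hypothesis $\S^2\subset\conv(\gamma)$ forces $h(u):=\max_t f_u(t)\geq 1$ and $\min_t f_u(t)\leq -1$ for every $u\in\S^2$, since both $u$ and $-u$ lie in $\conv(\gamma)$ and pair with $u$ under $\langle\cdot,\cdot\rangle$ to give $\pm 1$.

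The topological step is then routine. Whenever $h(u)>1$ I pick $t^*$ with $f_u(t^*)=h(u)>1$ and $t^{**}$ with $f_u(t^{**})\leq -1$; the two arcs of $\S^1$ joining these points are disjoint, and each of them contains, by the intermediate value theorem, a point where $f_u=1$. These two crossings live on disjoint arcs and are therefore distinct, so $|f_u^{-1}(1)|\geq 2$.

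The substantive step is to show that $h(u)>1$ for almost every $u\in\S^2$, i.e.\ that the exceptional set $S:=\{u\in\S^2:h(u)=1\}$ has two-dimensional Lebesgue measure zero. The condition $h(u)=1$ is equivalent to $T_u\S^2$ being a support hyperplane of the convex body $K:=\conv(\gamma)$, which in particular puts $u$ on $\partial K\cap\S^2$. Because $\gamma$ is $1$-rectifiable, no two-dimensional patch of $\S^2$ can lie in $\partial K$: such a patch would have to consist either of extreme points of $K$ (which all lie on the one-dimensional $\gamma$) or of interior points of higher-dimensional faces of $K$ (but any such face is planar, so its intersection with the curved $\S^2$ is at most a circular arc or an isolated point). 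Hence $\partial K\cap\S^2$ is at most one-dimensional, the set $S$ has measure zero on $\S^2$, and integrating the pointwise estimate over $\S^2\setminus S$ yields $H(\gamma)\geq 8\pi$.

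The main obstacle is exactly this measure-zero claim. The geometric picture---a $1$-rectifiable curve cannot pin its convex hull to the sphere across a two-dimensional region---is clear, but a fully rigorous proof requires either the face-by-face case analysis of $\partial\conv(\gamma)$ sketched above or an appeal to classical results on the Gauss map of convex bodies. A fallback would be to apply the topological step to the dilated curves $(1+\epsilon)\gamma$, for which $h\geq 1+\epsilon>1$ everywhere, and then to pass to the limit $\epsilon\to 0$; that limit is itself delicate, however, since $|f_u^{-1}(c)|$ can be discontinuous in $c$ at critical values such as $c=1$, making the direct measure-zero route preferable.
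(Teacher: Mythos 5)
Your overall strategy (a pointwise count of at least $2$ for almost every $p\in\S^2$, then integration) is the same as the paper's, and your handling of the generic case is fine: when $\max_t\langle\gamma(t),u\rangle>1$ and $\min_t\langle\gamma(t),u\rangle\leq -1$, the intermediate value theorem on the two arcs of $\S^1$ does give two parameter values in $\gamma^{-1}(T_u\S^2)$. The genuine gap is exactly where you flag it: the degenerate case where $T_u\S^2$ is a support plane of $K=\conv(\gamma)$. Your proposed justification that $S=\partial K\cap\S^2$ has measure zero does not go through as written. Showing that no two-dimensional \emph{patch} of $\S^2$ lies in $\partial K$ does not show that $\partial K\cap\S^2$ has measure zero (a set can have positive $\mathcal{H}^2$-measure without containing any relatively open piece of $\S^2$), and the face-by-face argument founders on the fact that a convex body can have uncountably many positive-dimensional faces, so you cannot conclude measure zero by summing measure-zero contributions face by face. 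The claim itself is true, but a correct proof seems to require the classical fact that the support function of $K$ is differentiable almost everywhere on $\S^2$ (equivalently, the exposed face $F(K,u)$ is a singleton for a.e.\ $u$), combined with Milman's theorem to place that singleton on $\gamma$; as you anticipate, this is a nontrivial appeal to convexity theory rather than a routine observation.

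The paper sidesteps the measure-zero question entirely, and you could too: the support-plane case is not a bad case but a good one. If $T_p\S^2$ supports $K$ at $p$ and $p\notin\gamma$, then $p$ lies in the exposed face $K\cap T_p\S^2=\conv(\gamma\cap T_p\S^2)$; by Carath\'eodory (applied inside that plane) $p$ lies in a segment or triangle $\Delta$ with vertices on $\gamma$, and since $p\notin\gamma$ it lies in the relative interior of $\Delta$, which forces all of $\Delta$, hence at least two of its vertices, into $T_p\S^2$. (Equivalently: if $\gamma\cap T_p\S^2$ were a single point, its convex hull would be that point, which would have to equal $p$, contradicting $p\notin\gamma$.) Thus the count is at least $2$ for \emph{every} $p\in\S^2\setminus\gamma$, and the only measure-zero set you need to discard is $\gamma\cap\S^2$ itself, which is harmless since $\gamma$ is rectifiable. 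I recommend replacing your measure-zero step with this argument; your IVT step then covers the non-support case and the two together give $H(\gamma)\geq 8\pi$.
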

\begin{proof}
We claim that for every $p\in\S^2\setminus\gamma$,  $T_p\S^2$ intersects $\gamma$ in at least two points.
To see this set $C:=\conv(\gamma)$. Either $T_p\S^2$ is a support plane of $C$, or else $C$ has points in the interior of each of the closed half-spaces determined by $T_p\S^2$. In the latter case it is obvious that the claim holds. Suppose then that $T_p\S^2$ is a support plane of $C$. By Caratheodory's theorem \cite[p. 3]{schneider:book}, $p$ must lie in a line segment or a triangle $\Delta$ whose vertices belong to $\gamma$. Since, by assumption $p\not\in\gamma$, $p$ must belong to the relative interior of $\Delta$. Consequently $\Delta$ has to lie in $T_p\S^2$. Then the vertices of $\Delta$ yield the desired points.
\end{proof}

 Next we develop an analytic formula for computing $H$, following the basic outline of the proof of Crofton's formula in Chern \cite[p. 116]{chern:1967}.
Suppose that $\gamma$ is piecewise $\C^1$, and its projection into $\S^2$, $\ol\gamma:=\gamma/\|\gamma\|$ has non vanishing speed. Let $\ol T:=\ol\gamma'/\|\ol\gamma'\|$ and $\ol\nu:=\ol\gamma\times \ol T$. Then $(\ol\gamma,\ol T,\ol\nu)$ is a moving orthonormal frame along $\ol\gamma$. It is easy to check that the derivative of this frame is given by
$$
\left( 
 \begin{array}{c}
\ol\gamma  \\
\ol T  \\
\ol\nu \end{array} 
\right)'=
 \left( 
 \begin{array}{ccc}
0 & v & 0 \\
-v & 0 & \lambda \\
0 & -\lambda & 0 \end{array} 
\right)
\left( 
 \begin{array}{c}
\ol\gamma  \\
\ol T  \\
\ol\nu \end{array} 
\right),
$$
where $v:=\|\ol\gamma'\|$  and 
 $\lambda\colon[a,b]\to\R$ is some scalar function.
Define $F\colon[a,b]\times[0,2\pi]\to\S^2$ by
$$
F(t,\theta):=h(t)\ol\gamma(t)+r(t)\big( \cos(\theta) \ol T(t)+\sin(\theta)\ol\nu(t)\big),
$$
where
$$
r:=\frac{\sqrt{\|\gamma\|^2-1}}{\|\gamma\|}\quad\text{and}\quad h:=\frac{1}{\|\gamma\|}.
$$
For each $t\in[a,b]$, $F(t,\theta)$ parametrizes the horizon of $\gamma(t)$.  In particular, for all $p\in\S^2$, 
$$
F^{-1}(p)=\gamma^{-1}(T_p \S^2).
$$
Thus the area formula \cite[Thm 3.2.3]{federer:book} yields that
\begin{equation}\label{eq:horizon}
H(\gamma)=\int_{p\in\S^2}\#F^{-1}(p)\,dp=\int_a^b\int_0^{2\pi} \Jac(F)\,d\theta dt,
\end{equation}
where $\Jac(F):=\|\partial F/\partial t\times \partial F/\partial\theta\|$ denotes the Jacobian of $F$.
A computation shows that
$$
\Jac(F)=\left|r(t)v(t)\cos(\theta)-h'(t)\right|.
$$
Further we have
$$
v=\frac{\sqrt{\|\gamma'\|^2\|\gamma\|^2-\l \gamma,\gamma'\r^2}}{\|\gamma\|^2}\quad\text{and}\quad h'=-\frac{\l \gamma,\gamma'\r}{\|\gamma\|^3}.
$$
So we conclude that
$$
\Jac(F)=\frac{1}{\|\gamma\|^3}\left| \sqrt{(\|\gamma\|^2-1)(\|\gamma'\|^2\|\gamma\|^2-\l \gamma,\gamma'\r^2)}\cos(\theta)+\l \gamma,\gamma'\r \right|.
$$
Note that if $\|\gamma'\|=1$ and $\alpha(t)$ is the angle between $\gamma'(t)$ and $\gamma(t)$, then
$$
\cos(\alpha)=\frac{\l \gamma,\gamma'\r}{\|\gamma\|},\quad \text{and}\quad \sin(\alpha)=\frac{\sqrt{\|\gamma\|^2-\l \gamma,\gamma'\r^2}}{\|\gamma\|},
$$
which yields
\begin{equation*}\label{eq:jac}
\Jac(F)=\frac{1}{\|\gamma\|^2}\left| \sqrt{\|\gamma\|^2-1}\sin(\alpha)\cos(\theta)+\cos(\alpha) \right|.
\end{equation*}
The observations in this section may now be summarized as follows:

\begin{lem}\label{lem:horizonformula}
Let $\gamma\colon[a,b]\to\R^3$ be a piecewise $\C^1$ curve, and $\alpha(t)$ be the angle between $\gamma(t)$ and $\gamma'(t)$. Suppose that $\alpha(t)\neq 0$, $\pi$ except at finitely many points, and $\|\gamma'\|=1$. Then
\begin{equation}\label{eq:horizon2}
H(\gamma)=\int_a^b\int_0^{2\pi} \frac{1}{\|\gamma\|^2}\left| \sqrt{\|\gamma\|^2-1}\sin(\alpha)\cos(\theta)+\cos(\alpha) \right|\,d\theta dt.
\end{equation}
\qed
\end{lem}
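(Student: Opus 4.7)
The plan is to apply the area formula to an explicit parametrization $F\colon[a,b]\times[0,2\pi]\to\S^2$ whose image at each time $t$ is the horizon circle of $\gamma(t)$. First I would set up the frame: at each $t$ where $\alpha(t)\neq 0,\pi$, the radial projection $\ol\gamma=\gamma/\|\gamma\|$ has nonvanishing derivative, so $\ol T:=\ol\gamma'/\|\ol\gamma'\|$ and $\ol\nu:=\ol\gamma\times\ol T$ yield a moving orthonormal frame $(\ol\gamma,\ol T,\ol\nu)$ along $\ol\gamma$, whose derivatives satisfy the Frenet-like system displayed in the excerpt. The horizon of $\gamma(t)$ is the intersection of $\S^2$ with the plane orthogonal to $\gamma(t)$ at signed distance $h(t)=1/\|\gamma(t)\|$ from the origin, so it is a circle of radius $r(t)=\sqrt{\|\gamma(t)\|^2-1}/\|\gamma(t)\|$ centered at $h(t)\ol\gamma(t)$; writing a generic point of this circle in the frame produces the stated $F(t,\theta)$.

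The next step is to identify $\#F^{-1}(p)$ with $\#\gamma^{-1}(T_p\S^2)$ for almost every $p\in\S^2$. Since $T_p\S^2$ is the unique plane through $p$ perpendicular to $p$, and horizon points are exactly those whose tangent plane passes through the viewpoint, the point $p$ lies on the horizon of $\gamma(t)$ iff $\gamma(t)\in T_p\S^2$. Hence $F^{-1}(p)=\gamma^{-1}(T_p\S^2)$ outside the finite set on which $\alpha\in\{0,\pi\}$, which contributes zero measure on either side. The area formula then delivers \eqref{eq:horizon}: $H(\gamma)=\int_a^b\int_0^{2\pi}\Jac(F)\,d\theta\,dt$.

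The main calculational task is to evaluate $\Jac(F)=\|\partial_t F\times\partial_\theta F\|$. Differentiating $F$ and substituting the Frenet-like derivatives expresses both partials as linear combinations of $\ol\gamma$, $\ol T$, $\ol\nu$ with explicit coefficients in $v$, $\lambda$, $h$, $r$, and trigonometric functions of $\theta$; when the cross product is formed, the $\lambda$-contributions cancel and the survivors collapse to the scalar $|rv\cos\theta-h'|$. I expect this to be the most tedious step of the argument, but it is ultimately bookkeeping in a $3\times 3$ determinant and requires no new conceptual input.

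Finally, I would reexpress $v=\|\ol\gamma'\|$ and $h'$ in the original coordinates. Direct differentiation of $\ol\gamma=\gamma/\|\gamma\|$ gives $h'=-\l\gamma,\gamma'\r/\|\gamma\|^3$ and, after a short manipulation, $v=\sqrt{\|\gamma'\|^2\|\gamma\|^2-\l\gamma,\gamma'\r^2}/\|\gamma\|^2$. Substituting these together with the definition of $r$ into $|rv\cos\theta-h'|$, factoring out $1/\|\gamma\|^3$, and then invoking $\|\gamma'\|=1$ together with $\cos\alpha=\l\gamma,\gamma'\r/\|\gamma\|$ and $\sin\alpha=\sqrt{\|\gamma\|^2-\l\gamma,\gamma'\r^2}/\|\gamma\|$, we arrive at exactly the integrand of \eqref{eq:horizon2}. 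The finitely many points where $\alpha\in\{0,\pi\}$ and the frame is undefined contribute a set of measure zero in the double integral, so the formula extends across them by monotone convergence on compact subintervals, completing the proof.
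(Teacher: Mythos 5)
Your proposal is correct and follows essentially the same route as the paper: the same moving frame $(\ol\gamma,\ol T,\ol\nu)$, the same parametrization $F(t,\theta)$ of the horizon circles, the identification $F^{-1}(p)=\gamma^{-1}(T_p\S^2)$ feeding into the area formula, and the same Jacobian computation collapsing to $|rv\cos\theta-h'|$ before substituting $v$, $h'$, and the angle $\alpha$. No further comment is needed.
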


\begin{note}\label{note:gammac}
If $\|\gamma\|=c$, then  \eqref{eq:8pi} together with Lemma \ref{lem:horizonformula} yield
$$
8\pi\leq H(\gamma)
= \frac{\sqrt{c^2-1}}{c^2}  4 L
\leq 2 L,
$$
and equality holds only if  $c=\sqrt{2}$.
Thus, as has been noted by Jean-Marc Schlenker  \cite{orourkeMO}, see also \cite[Sec. 2.2]{zalgaller:2003}:
If  $\gamma$  inspects $\S^2$ and $\|\gamma\|=c$, then 
$
L \geq 4\pi,
$
which is the optimal inequality for closed inspection curves  originally conjectured by Zalgaller \cite{zalgaller:1996}, and also suggested by Gjergji Zaimi \cite{orourkeMO}. We will improve this observation in Proposition \ref{prop:crofton} below.
\end{note}

 \subsection{The closed case}
 To prove \eqref{eq:3}, we begin by recording a pair of lemmas which yield an upper bound for the horizon. Let us say that a piecewise $\C^1$ curve $\gamma$ inspects the sphere $\S^2$ \emph{efficiently}, provided that that $\gamma$ inspects $\S^2$ and the tangent lines of $\gamma$ do not enter $\S^2$.  
 
 \begin{lem}\label{lem:tildegamma}
For every closed polygonal curve $\gamma$ which inspects $\S^2$, there is a closed polygonal curve $\tilde\gamma$, with $L[\tilde\gamma]\leq L[\gamma]$, which inspects $\S^2$ efficiently.
\end{lem}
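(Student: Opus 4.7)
My plan is to prove the lemma by a length-minimization argument. Let $B$ denote the closed unit ball bounded by $\S^2$, and let $\mathcal F$ be the family of closed polygonal curves in $\R^3\setminus\inte(B)$ that inspect $\S^2$, have length at most $L[\gamma]$, and use at most $N$ vertices, where $N$ is chosen large enough (for example, larger than the vertex count of $\gamma$). The length bound confines every vertex to a bounded region of $\R^3$, the inspection condition $B\subset\conv(\cdot)$ is preserved under Hausdorff limits, and the length functional $L[\cdot]$ is continuous, so $\mathcal F$ is compact and contains a length minimizer $\tilde\gamma$, which automatically satisfies $L[\tilde\gamma]\leq L[\gamma]$.

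I then claim $\tilde\gamma$ is efficient. Suppose for contradiction that some edge $e=uv$ of $\tilde\gamma$ has supporting line entering $\S^2$. Since segment $uv$ lies outside $\inte(B)$, the foot $f$ of the perpendicular from the origin to the line through $uv$ lies off this segment; without loss of generality $u$ lies between $v$ and $f$, so $u$ is closer to the origin than $v$, and the extension of $e$ past $u$ enters $B$. To derive a contradiction I would construct a strictly shorter inspection curve. A first attempt is to slide $u$ toward $v$ by setting $u_\varepsilon := u + \varepsilon\nu$, where $\nu=(v-u)/|v-u|$. This shrinks $e$ by $\varepsilon$, while the adjacent edge $wu$ (with $w$ the vertex preceding $u$) changes by $-\varepsilon\langle\mu,\nu\rangle + O(\varepsilon^2)$, where $\mu=(w-u)/|w-u|$, yielding a net length change $-\varepsilon(1+\langle\mu,\nu\rangle)+O(\varepsilon^2)$. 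This is strictly negative unless $\mu=-\nu$ (the antiparallel case, in which $u$ is not a genuine vertex and may simply be deleted).

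The main obstacle is verifying that inspection is preserved: $B\subset\conv(\tilde\gamma_\varepsilon)$ must still hold after the perturbation. Because $u_\varepsilon$ lies on segment $uv$, the perturbed convex hull is contained in the original, and one must ensure no portion of $B$ is exposed. The delicate point is that the cap of $\conv(\tilde\gamma)$ trimmed by sliding $u$ lies on the side of $u$ away from $v$ --- precisely the side along which the line $uv$ enters $B$ --- so the naive sliding perturbation may uncover part of $\S^2$ near $u$. To salvage the argument, I expect one must perturb $u$ in a combined direction (tangential along $\nu$ plus a compensating outward component away from the ball, so the deformed cap avoids $B$), or else introduce a new vertex $u^*$ on a tangent line from $v$ to $\S^2$ in the plane through $O,u,v$, replacing $u$ by a tangent-pair whose two edges are both efficient; any local length increase in this second option can then be absorbed using the slack in the inspection condition supplied by the bad-edge hypothesis.
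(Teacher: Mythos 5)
There is a genuine gap. Your compactness setup is fine (the length bound and the connectedness of the curve do confine it to a bounded region, and the constraints are closed), but the entire content of the lemma lies in the step you leave open: showing that a length minimizer, or more generally a curve with a bad edge, admits a shortening that \emph{preserves the inspection property}. You correctly identify that sliding the near vertex $u$ toward $v$ trims the convex hull precisely on the side where the line $uv$ dives toward the ball, so the cap of $\S^2$ seen only from a neighborhood of $u$ may be uncovered; but the two proposed repairs are not arguments. Perturbing $u$ ``in a combined direction with an outward component'' is unquantified, and the second option is logically incoherent within your framework: if replacing $u$ by a tangent pair \emph{increases} length locally, minimality of $\tilde\gamma$ yields no contradiction, and ``absorbing the increase using slack in the inspection condition'' does not convert a constraint margin into a length decrease. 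There is also a secondary unaddressed failure mode: the new edge $wu_\varepsilon$ is a different segment from $wu$, and if $wu$ was tangent to $\S^2$ the perturbed edge may enter the open ball, violating the requirement that the curve stay outside $\S^2$.

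The missing idea, which is how the paper proceeds, is a global surgery rather than a local perturbation, organized around the tangent cone from the \emph{far} endpoint of the bad edge. If $E=pp'$ is a bad edge with $p$ the vertex farther from $\S^2$, and $C$ is the cone with apex $p$ tangent to $\S^2$, then $p'$ lies in the region $X$ inside $C$ and outside $\S^2$, and the key geometric fact is that every point of $X$ sees only a subset of the points of $\S^2$ visible from $p$. Since $\gamma$ cannot lie entirely in $X$ (else $\S^2\not\subset\conv(\gamma)$), the complementary arc from $p'$ to $p$ first meets $\partial C$ at some point $q$; deleting $E$ together with the subarc from $p'$ to $q$ and inserting the straight segment $pq$ (which lies on a tangent ray of $C$, hence is itself an efficient edge) strictly shortens the path from $p$ to $q$, keeps the curve outside $\S^2$, and preserves inspection because $p$ covers for everything the deleted points saw. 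Each such step removes a bad edge without increasing the edge count, so finitely many iterations terminate. If you want to keep a variational flavor you could run this surgery on your minimizer, but then the compactness argument becomes unnecessary; the surgery alone proves the lemma.
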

\begin{proof}
Let $E$ be an edge of $\gamma$ whose corresponding line enters $\S^2$ (if $E$ does not exist, then there is nothing to prove). Let $p$ be the vertex of $E$ which is farthest from $\S^2$, and $C$ be the cone with vertex $p$ which is tangent to $\S^2$. Then the other vertex of $E$, say $p'$ belongs to the region $X$ which lies inside $C$ and outside $\S^2$, see Figure \ref{fig:cones}. Consider the polygonal arc $p' p$ of $\gamma$ which is different from $E$. Note that $\gamma$ cannot lie entirely in $X$ for then $\S^2$ cannot be in the convex hull of $\gamma$. So $p'p$ must have a point outside $X$. In particular, there is a point of $p'p$, other than $p$  which belongs to $\p C$. Let $q$ be the first such point, and replace the subsegment $p'q$ of $p'p$  with the line segment joining $p$ and $q$. This procedure removes $E$ and does not increase the number of edges of $\gamma$ or its length. Further, the new curve still inspects $\S^2$, because $p$ ``sees" all points of $\S^2$ which were visible from any points of $pq$. Since $\gamma$ has only finitely many edges, repeating this procedure eventually  yields the desired curve $\tilde \gamma$.
\end{proof}

\begin{figure}[h]
   \centering
    \begin{overpic}[height=1.1in]{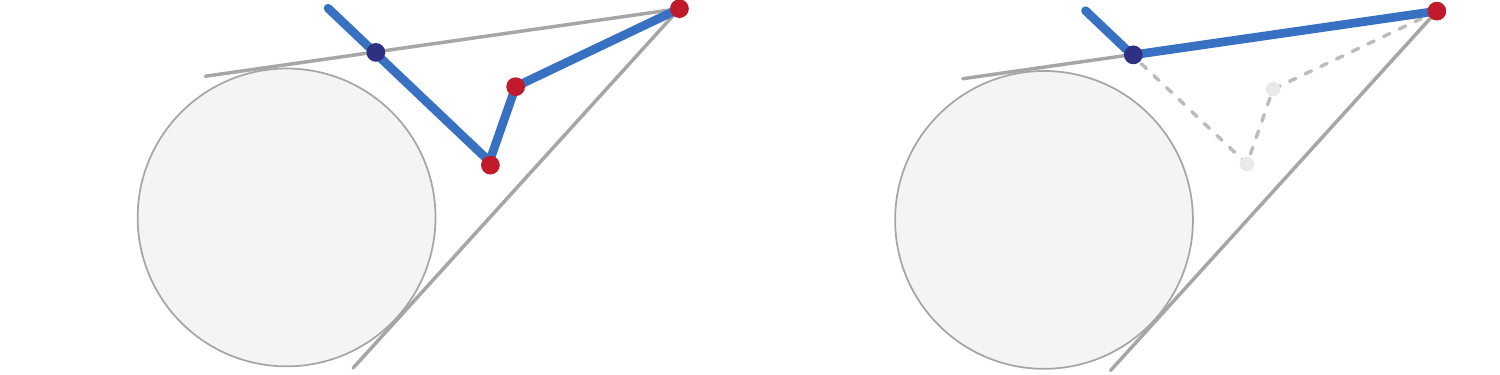} 
    \put(46.5,24.5){\small $p$}
     \put(32.5,20){\small $p'$}
    \put(25,23.5){\small $q$}
     \put(97,24.5){\small $p$}
    \put(76,23.5){\small $q$}
    \end{overpic}
   \caption{}
   \label{fig:cones}
\end{figure}

\begin{lem}\label{lem:alpha}
Suppose that $\gamma$ is a  piecewise $\C^1$ curve which inspects $\S^2$ efficiently, then
\begin{equation}\label{eq:JacF}
H(\gamma)\leq \frac{4\pi}{3\sqrt3}L.
\end{equation}
\end{lem}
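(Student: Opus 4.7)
The plan is to use the horizon formula \eqref{eq:horizon2} and reduce the claim to a pointwise-in-$t$ bound on the inner $\theta$-integral. First, by Lemma \ref{lem:1} reparametrize $\gamma$ by arclength, and write $c(t):=\|\gamma(t)\|$ and $\alpha(t)$ for the angle between $\gamma(t)$ and $\gamma'(t)$. Since $\|\gamma'\|=1$, the distance from the origin to the tangent line at $\gamma(t)$ equals $c\sin\alpha$, so the efficient-inspection hypothesis translates into the pointwise inequality $c\sin\alpha\ge 1$ (a.e.\ in $t$). Setting $A:=\sqrt{c^2-1}\sin\alpha$ and $B:=|\cos\alpha|$, this condition is exactly $A\ge B$, since $A^2-B^2=c^2\sin^2\alpha-1$.

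By the symmetry $\theta\mapsto\theta+\pi$ of the integrand in \eqref{eq:horizon2}, I may assume $\cos\alpha\ge 0$. Since $A\ge B\ge 0$, the quantity $A\cos\theta+B$ changes sign at $\cos\theta=-B/A$, and splitting $\int_0^{2\pi}|A\cos\theta+B|\,d\theta$ at these zeros yields
\[
\int_0^{2\pi}|A\cos\theta+B|\,d\theta \;=\; 4\sqrt{A^2-B^2}+4B\arcsin(B/A).
\]
Substituting back into \eqref{eq:horizon2} reduces the lemma to the pointwise bound
\[
\Phi(c,\alpha)\;:=\;\frac{4}{c^2}\!\left[\sqrt{c^2\sin^2\alpha-1}+\cos\alpha\arcsin\!\left(\frac{\cos\alpha}{\sin\alpha\sqrt{c^2-1}}\right)\right]\;\le\;\frac{4\pi}{3\sqrt 3}
\]
over the region $D:=\{(c,\alpha):c\ge 1,\,0<\alpha\le\pi/2,\,c\sin\alpha\ge 1\}$; integration in $t$ then finishes the proof.

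The crux is the maximization of $\Phi$ on $D$, which I would handle by separately examining the boundary and the interior critical points. On the grazing boundary $c\sin\alpha=1$, the square root vanishes and $\arcsin(\cdot)=\pi/2$, giving $\Phi=2\pi\sqrt{c^2-1}/c^3$; a one-variable calculus exercise shows this is maximized at $c=\sqrt{3/2}$ with value $4\pi/(3\sqrt 3)$—the conjectured maximizer. On the edge $\alpha=\pi/2$, $\Phi=4\sqrt{c^2-1}/c^2$, which is maximized at $c=\sqrt 2$ with value $2<4\pi/(3\sqrt 3)$. At an interior critical point, setting $\partial_c\Phi=\partial_\alpha\Phi=0$ and dividing the two equations eliminates the $\arcsin$ and produces the algebraic relation $\sin^2\alpha=2(c^2-1)/c^2$ (which forces $c\in[\sqrt{3/2},\sqrt 2]$); using the $\alpha$-stationarity equation a second time to express the remaining arcsin term algebraically then collapses $\Phi$ to $2\sqrt{2c^2-3}/(c^2-1)$, which is strictly less than $2$ on the relevant interior range. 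Hence the global maximum of $\Phi$ on $D$ is $4\pi/(3\sqrt 3)$, attained at $c=\sqrt{3/2}$, $\sin\alpha=\sqrt{2/3}$.

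The main obstacle I anticipate is precisely this interior critical-point analysis: $\Phi$ is transcendental in $(c,\alpha)$, and the argument hinges on the algebraic ``miracle" whereby the arcsin drops out both when one eliminates it between the two stationarity equations and when one substitutes the resulting relation back into $\Phi$. Verifying these two cancellations carefully, together with the routine one-variable calculus on the two boundary pieces, is where the delicate work lies.
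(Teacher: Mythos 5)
Your proposal is correct, and it diverges from the paper precisely at the step where the paper leans on a computer. Both arguments reduce the lemma, via \eqref{eq:horizon} and \eqref{eq:horizon2}, to the same pointwise bound $\int_0^{2\pi}\Jac(F)\,d\theta\le 4\pi/(3\sqrt3)$ under the constraint $\sin\alpha\ge 1/\|\gamma\|$ coming from efficiency. The paper first disposes of $\|\gamma\|\ge 3\sqrt3/2$ by Cauchy--Schwarz, then defines $I(x,y)$ and asserts, ``with the aid of a computer algebra system,'' that its maximum over $\{1/x\le y\le 1\}$ lies on the curve $y=1/x$, where a one-variable computation gives exactly $2\pi\sqrt{x^2-1}/x^3\le 4\pi/(3\sqrt3)$. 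You instead evaluate the $\theta$-integral in closed form, $4\sqrt{A^2-B^2}+4B\arcsin(B/A)$ with $A=\sqrt{c^2-1}\sin\alpha$, $B=|\cos\alpha|$ (which is the correct antiderivative computation), and then carry out a genuine two-variable optimization. I checked your claimed cancellations: $\partial_\alpha\Phi=0$ is equivalent to $\arcsin(B/A)=\cos\alpha\sqrt{c^2\sin^2\alpha-1}/\sin^2\alpha$, which makes $G:=c^2\Phi/4$ equal to $\sqrt{c^2\sin^2\alpha-1}/\sin^2\alpha$; combined with $\partial_c\Phi=0$, i.e.\ $G=c^2\sqrt{c^2\sin^2\alpha-1}/(2(c^2-1))$, this yields $\sin^2\alpha=2(c^2-1)/c^2$ and hence $\Phi=2\sqrt{2c^2-3}/(c^2-1)$, which is increasing on $c^2\in(3/2,2)$ with supremum $2<4\pi/(3\sqrt3)$. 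Your boundary computations (the grazing curve giving the maximizer at $c=\sqrt{3/2}$, and the edge $\alpha=\pi/2$ giving at most $2$) also check out, and $\Phi\to 0$ as $c\to\infty$ so the supremum is attained. The upshot is that your route replaces the paper's graphical/CAS verification that the maximum of $I$ sits on $y=1/x$ with a fully rigorous elementary argument, at the cost of the explicit $\arcsin$ evaluation and the critical-point algebra; the paper's version is shorter to write but is not a complete proof of the maximization step without the computer check. The only small polish I would ask for: state explicitly that the symmetry reduction to $\cos\alpha\ge0$ comes from $\theta\mapsto\theta+\pi$ (the paper's own justification, ``switching $\theta$ to $-\theta$,'' is the wrong substitution), and note that the pointwise bound is applied a.e.\ in $t$ on each $\C^1$ piece before integrating.
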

\begin{proof}
By \eqref{eq:horizon}, it suffices to show that 
\begin{equation}\label{eq:JacF2}
\int_0^{2\pi} \Jac(F)\,d\theta \leq \frac{4\pi}{3\sqrt3}.
\end{equation}
To this end note that, by the Cauchy-Schwartz inequality,
\begin{eqnarray*}
\Jac(F)&=&\frac{1}{\|\gamma\|^2}\left|\Big\lan \big(\sqrt{\|\gamma\|^2-1}\cos(\theta),1\big), \big(\sin(\alpha),\cos(\alpha)\big)\Big \ran\right|.\\
&\leq& \frac{1}{\|\gamma\|^2}\|\big(\sqrt{\|\gamma\|^2-1}\cos(\theta),1\big)\| \leq \frac{1}{\|\gamma\|}.
\end{eqnarray*}
Thus \eqref{eq:JacF2} is satisfied whenever $\|\gamma\|\geq 3\sqrt{3}/2$. So  it suffices now to check \eqref{eq:JacF2} for $\|\gamma\|<3\sqrt{3}/2<2.6$.
To this end, set
$$
I(x,y):=\int_0^{2\pi} \frac{1}{x^2}\left| \sqrt{x^2-1}y\cos(\theta)+\sqrt{1-y^2} \right|\,d\theta.
$$
Then by \eqref{eq:horizon2}, $\int_0^{2\pi} \Jac(F)\,d\theta=I(\|\gamma\|,\sin(\alpha))$, because replacing $\cos(\alpha)$ with $|\cos(\alpha)|$ in \eqref{eq:horizon2} amounts at most to switching $\theta$ to $-\theta$, which does not affect the value of the integral.
Further note that, by elementary trigonometry, the tangent lines of $\gamma$ avoid the interior of $\S^2$ if and only if 
$$
\sin(\alpha)\geq \frac{1}{\|\gamma\|}.
$$
So we just need to check that $I\leq 4\pi/(3\sqrt3)\approx 2.4$ for $1\leq x\leq 3$ and $1/x\leq y\leq 1$, which may be done with the aid of a computer algebra system. In particular, graphing $I$ shows that the maximum of $I$ over the given region is achieved on the boundary curve $y=1/x$, see Figure \ref{fig:graph}.
\begin{figure}[h]
   \centering
    \begin{overpic}[height=1.75in]{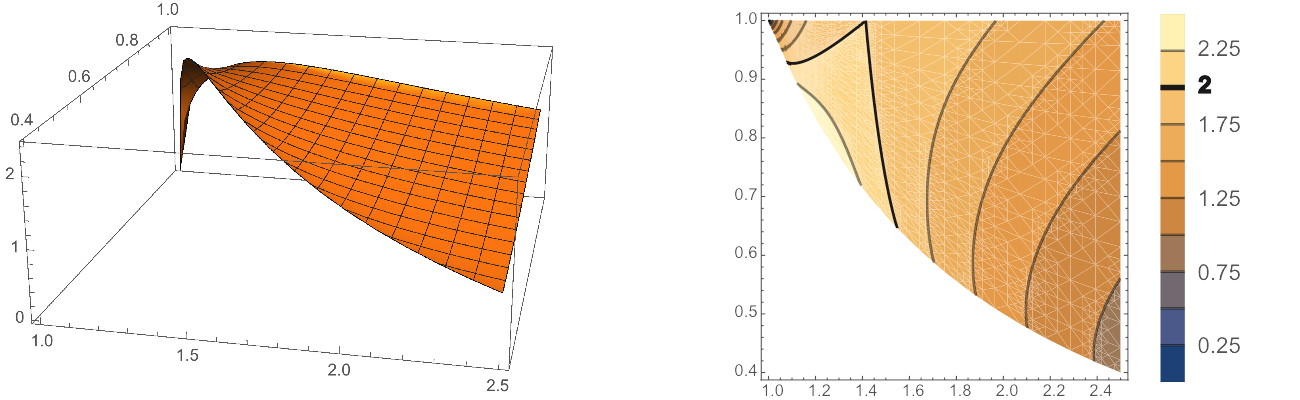} 
              \end{overpic}
   \caption{}
   \label{fig:graph}
\end{figure}
 Then it remains to note that
$$
I\(x,\frac1x\)= \frac{\sqrt{x^2-1}}{x^3}\int_0^{2\pi}( \cos(\theta)+1 )\,d\theta=2\pi \frac{\sqrt{x^2-1}}{x^3}\leq\frac{4\pi}{3\sqrt3},
$$
which completes the proof.
\end{proof}

Now we are ready to complete the proof of  \eqref{eq:3}. We may assume, after a translation, that $\S^2$ is a sphere of maximal radius which is contained in the convex hull of $\gamma$, and whose interior is disjoint from $\gamma$. In particular $r=1$.  We need to show then that $L\geq 6\sqrt 3$. To this end we may assume that
$\gamma$ is polygonal. Indeed,  let
$\gamma_i$ be a sequence of polygonal curves, converging to $\gamma$. Then $L_i/r_i\to L$ where $L_i$ and $r_i$ are the length and inradius of $\gamma_i$ respectively.  Thus, if $L_i/r_i\geq 6\sqrt3$, it follows that $L/r\geq 6\sqrt3$ as desired. 
Now we may let $\tilde\gamma$ be as in Lemma \ref{lem:tildegamma}. Then by Lemmas \ref{lem:8pi} and \ref{lem:alpha}
$$
8\pi\leq H(\tilde\gamma)\leq \frac{4\pi}{3\sqrt3}L[\tilde\gamma]\leq \frac{4\pi}{3\sqrt3}L,
$$
which completes the proof of Theorem \ref{thm:main2}.

\begin{note}\label{note:1.6}
 We were able to establish the conjectured sharp inequality $L\geq 4\pi$ only for the case of $\|\gamma\|=c$, since in this case $H(\gamma)\leq 2L$. If the same upper bound may be established for the class of all closed curves which inspect $\S^2$ efficiently, then we may replace the right hand side of the last displayed expression by $2L$, and thus obtain $L\geq 4\pi$ for all closed curves inspecting $\S^2$. The contour graph in Figure \ref{fig:graph} shows that $H(\gamma)\leq 2L$ if $\min\|\gamma\|\geq 1.6$. Thus, in this case $L\geq 4\pi$.
\end{note}

\section{Generalizations}\label{sec:general}

\subsection{More inradius estimates via Crofton}\label{sec:crofton}

Here we use Crofton's formulas to generalize the earlier observation in Note \ref{note:gammac}, on inspection curves of constant height:

\begin{prop}\label{prop:crofton}
Let $\gamma\colon[a,b]\to\R^3$ be a closed rectifiable curve which inspects $\S^2$, and set $M:=\max\|\gamma\|$, $m:=\min\|\gamma\|$. Then
$$
L\geq\frac{2\pi Mm}{\sqrt{M^2-1}}.
$$
In particular, when $M=m$, or $M\leq 2/\sqrt3$,  then $L\geq 4\pi$.
\end{prop}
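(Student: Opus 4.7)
The plan is to radially project $\gamma$ to $\S^2$ and apply a spherical Cauchy--Crofton formula for parallels. Set $\bar\gamma:=\gamma/\|\gamma\|$, a closed rectifiable curve on $\S^2$ (well-defined because $\|\gamma\|\geq m\geq 1$). Assuming $\gamma$ is parametrized by arclength, a direct computation using $\langle\gamma,\gamma'\rangle/\|\gamma\|=\cos\alpha$ gives $\|\bar\gamma'\|=\sin\alpha/\|\gamma\|$, so the spherical length satisfies
$$
\bar L := L[\bar\gamma] = \int_a^b \frac{\sin\alpha}{\|\gamma\|}\,dt \leq \frac{L}{m}.
$$

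Next I translate the inspection hypothesis into a condition on $\bar\gamma$. The set of points of $\S^2$ visible from $\gamma(t)$ is the spherical cap $\{q\in\S^2:\langle q,\gamma(t)\rangle\geq 1\}$, centered at $\bar\gamma(t)$ with angular radius $\rho(t)=\arccos(1/\|\gamma(t)\|)\leq\rho_M:=\arccos(1/M)<\pi/2$. Since these caps cover $\S^2$, $\bar\gamma$ is a $\rho_M$-net on $\S^2$: every $p\in\S^2$ lies within spherical distance $\rho_M$ of some $\bar\gamma(t)$. Applying this to both $p$ and its antipode $-p$ shows that $\bar\gamma$ has points on both sides of the parallel $\partial D_{\rho_M}(p)=\{q:\langle q,p\rangle=\cos\rho_M\}$ (since points near $-p$ are at distance at least $\pi-\rho_M>\rho_M$ from $p$); by closedness of $\bar\gamma$, the set $\bar\gamma\cap\partial D_{\rho_M}(p)$ therefore contains at least two points for almost every $p\in\S^2$.

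The Crofton ingredient is the spherical Cauchy--Crofton formula for parallels: for any piecewise $C^1$ curve $\bar\beta\subset\S^2$,
$$
\int_{\S^2}\#\big(\bar\beta\cap \partial D_\rho(p)\big)\,dp = 4\sin\rho\cdot L[\bar\beta].
$$
This follows by parametrizing $\Psi(t,\phi)=\cos\rho\,\bar\beta(t)+\sin\rho(\cos\phi\,T(t)+\sin\phi\,\nu(t))$, with $T$ the unit tangent and $\nu$ the unit spherical normal of $\bar\beta$, and expanding $\partial_t\Psi\times\partial_\phi\Psi$ in the moving frame: the geodesic-curvature terms cancel and one obtains $\Jac(\Psi)=\sin\rho\,|\cos\phi|\,\|\bar\beta'\|$, so the area formula and $\int_0^{2\pi}|\cos\phi|\,d\phi=4$ deliver the identity. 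Extension to rectifiable $\bar\gamma$ is by polygonal approximation, which I expect to be the main technical subtlety.

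Chaining the three inputs:
$$
8\pi \leq \int_{\S^2}\#\big(\bar\gamma\cap\partial D_{\rho_M}(p)\big)\,dp = 4\sin\rho_M\cdot\bar L \leq \frac{4\sin\rho_M}{m}\,L.
$$
Using $\sin\rho_M=\sqrt{M^2-1}/M$, this rearranges to $L\geq 2\pi Mm/\sqrt{M^2-1}$. The special cases are immediate: when $M=m$, the function $M^2/\sqrt{M^2-1}$ attains its minimum $2$ at $M=\sqrt2$, so $L\geq 4\pi$; and since $M/\sqrt{M^2-1}$ is decreasing in $M$ and equals $2$ at $M=2/\sqrt3$, we obtain $L\geq 2\pi\cdot 2\cdot m\geq 4\pi$ whenever $M\leq 2/\sqrt3$ (using $m\geq 1$).
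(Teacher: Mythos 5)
Your proof is correct and follows essentially the same route as the paper: radial projection to $\S^2$ with $\bar L\leq L/m$, the observation that the visibility caps of angular radius at most $\arccos(1/M)$ cover the sphere so that $\bar\gamma$ meets (almost) every circle of that radius at least twice, and the Crofton--Blaschke--Santal\'o formula for circles of fixed spherical radius (the paper's Lemma on Crofton-style integrals), which it cites rather than re-derives. The only cosmetic difference is that the paper sidesteps your ``almost every $p$'' caveat by counting intersections with circles of radius $\ol\rho'>\ol\rho$ and letting $\ol\rho'\to\ol\rho$.
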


Recall that, as we pointed out in Note \ref{note:1.6}, the conjectured inequality $L\geq 4\pi$ holds when $M\geq 1.6$. This, together with the above proposition shows that if there exists a closed inspection curve with $L<4\pi$, then $1.15\leq\|\gamma(t)\|\leq 1.6$ for some $t\in[a,b]$.
To establish the above inequality, let us record that:

\begin{lem}[Crofton-Blaschke-Santalo]\label{lem:crofton}
For every point $p\in\S^2$, and $0\leq\rho\leq\pi/2$, let $C_\rho(p)$ denote the circle of spherical radius $\rho$ centered at $p$. Then
$$
L=\frac{1}{4\sin(\rho)}\int_{p\in\S^2}\#\gamma^{-1}(C_\rho(p))
$$ 
\end{lem}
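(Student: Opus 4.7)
The identity is the classical Crofton-Blaschke-Santalo formula for curves on $\S^2$, applied here with $\gamma$ viewed as a spherical curve (which is the only setting in which $\gamma^{-1}(C_\rho(p))$ is typically nonempty). My approach mirrors the derivation of the horizon formula in Lemma \ref{lem:horizonformula}: set up an explicit parametrization of the incidence locus $\{(t,p)\in[a,b]\times\S^2:\gamma(t)\in C_\rho(p)\}$ and apply the area formula.

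After reparametrizing $\gamma$ by spherical arclength via Lemma \ref{lem:1}, we may assume $\|\gamma'\|=1$ and $\langle\gamma,\gamma'\rangle=0$ almost everywhere. Set $T:=\gamma'$ and $\nu:=\gamma\times T$, so that $(\gamma,T,\nu)$ is a moving orthonormal frame along $\gamma$. Differentiating the orthonormality relations yields frame equations
$$\gamma'=T,\qquad T'=-\gamma+\lambda\nu,\qquad \nu'=-\lambda T$$
for some scalar function $\lambda$. Parametrize the small circles around $\gamma$ by
$$F(t,\theta):=\cos\rho\,\gamma(t)+\sin\rho\bigl(\cos\theta\,T(t)+\sin\theta\,\nu(t)\bigr),$$
so that for each $t$, $\theta\mapsto F(t,\theta)$ traces out $C_\rho(\gamma(t))\subset\S^2$. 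By symmetry of spherical distance, $\gamma(t)\in C_\rho(p)\iff p\in C_\rho(\gamma(t))$, so each $t\in\gamma^{-1}(C_\rho(p))$ contributes exactly one preimage of $p$ under $F$, whence $\#F^{-1}(p)=\#\gamma^{-1}(C_\rho(p))$ for a.e.\ $p\in\S^2$.

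The main computational step is to evaluate $\Jac(F)=\|F_t\times F_\theta\|$. Using the frame equations,
$$F_t=-\sin\rho\cos\theta\,\gamma+(\cos\rho-\lambda\sin\rho\sin\theta)T+\lambda\sin\rho\cos\theta\,\nu,\qquad F_\theta=\sin\rho\bigl(-\sin\theta\,T+\cos\theta\,\nu\bigr).$$
Expanding the cross product component by component in the frame $(\gamma,T,\nu)$, the $\lambda$-dependent contributions cancel pairwise---this algebraic miracle is the one step worth verifying with care---leaving
$$\Jac(F)=\sin\rho\,|\cos\theta|.$$
This is the analogue of the Jacobian computation in Lemma \ref{lem:horizonformula}; here the answer is strikingly independent of $\lambda$, reflecting that the spherical setting fixes the angle between $\gamma$ and $\gamma'$ at $\pi/2$ and the ``height'' at $1$.

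Finally, the area formula \cite[Thm.~3.2.3]{federer:book} yields
$$\int_{\S^2}\#\gamma^{-1}(C_\rho(p))\,dp=\int_a^b\!\!\int_0^{2\pi}\Jac(F)\,d\theta\,dt=\sin\rho\int_a^b\!\!\int_0^{2\pi}|\cos\theta|\,d\theta\,dt=4\sin\rho\cdot L,$$
which rearranges to the asserted identity. The standard measure-theoretic subtleties---points at which $\gamma'$ fails to exist or the frame degenerates, and the degenerate values $\rho=0,\pi/2$---are dispatched exactly as in the proof of Lemma \ref{lem:horizonformula} via Lemma \ref{lem:1}.
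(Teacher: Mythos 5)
Your derivation is correct and, notably, supplies a proof where the paper gives none: the paper simply cites Blaschke and Santal\'o for this formula, whereas you rederive it by the same area-formula scheme the paper uses for the horizon in Lemma \ref{lem:horizonformula}. I checked the computation: with the frame equations $\gamma'=T$, $T'=-\gamma+\lambda\nu$, $\nu'=-\lambda T$, the cross product $F_t\times F_\theta$ has components $(\sin\rho\cos\rho\cos\theta,\ \sin^2\rho\cos^2\theta,\ \sin^2\rho\sin\theta\cos\theta)$ in the frame $(\gamma,T,\nu)$, the $\lambda$-terms do cancel in the $\gamma$-component, and the norm collapses to $\sin\rho\,|\cos\theta|$, giving $\int_0^{2\pi}\Jac(F)\,d\theta=4\sin\rho$ and hence the stated identity. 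The bijection $\#F^{-1}(p)=\#\gamma^{-1}(C_\rho(p))$ via symmetry of spherical distance is also right. This is a nice self-contained argument that makes Section \ref{sec:crofton} independent of the external references.

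The one genuine gap is the last sentence. The statement is invoked in Proposition \ref{prop:crofton} for the spherical projection of an arbitrary closed \emph{rectifiable} curve, and your argument does not reach that generality: after an arclength reparametrization $\gamma$ is merely Lipschitz, so $T=\gamma'$ exists only almost everywhere and need not be continuous, hence $F$ is not a Lipschitz map and Federer's area formula does not apply. Deferring this to ``the proof of Lemma \ref{lem:horizonformula} via Lemma \ref{lem:1}'' does not close the gap, because Lemma \ref{lem:horizonformula} is itself stated only for piecewise $\C^1$ curves with nondegenerate frame. To cover all rectifiable curves you need an additional step, e.g.\ establish the formula for spherical geodesic polygons (where it reduces, by additivity of both sides, to the elementary computation for a single great-circle arc) and then pass to the limit over inscribed geodesic polygons, using monotone convergence of the counting functions and convergence of lengths; this is precisely the extension credited to Santal\'o. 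Alternatively, state your lemma for piecewise $\C^1$ spherical curves and note that the application in Proposition \ref{prop:crofton} only requires the resulting inequality after such an approximation.
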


Crofton was the first person to obtain integrals  of this type for planar curves \cite{santalo:1976}. According to Santalo \cite{santalo:1942}, Blaschke observed the analogous phenomena on the sphere for regular curves \cite{blaschke:1937}, which were then extended to all rectifiable curves by Santalo \cite[(37)]{santalo:1942}.

\begin{proof}[Proof of Proposition \ref{prop:crofton}]
Let $\rho(t)$ be the (spherical) radius of the ``visibility circle", generated by rays which emanate from $\gamma(t)$ and are tangent to $\S^2$. Then $\cos(\rho)=1/\|\gamma\|\geq 1/M$ by simple trigonometry, which in turn yields that
$$
\sin(\rho)\leq\frac{\sqrt{M^2-1}}{M}.
$$
  Let $\ol\rho$ be the supremum of the  radii of the  visibility circles.  Then the union of all spherical disks of radius $\ol\rho$ centered at points of $\ol\gamma$ cover $\S^2$. Consequently, $\ol\gamma$ intersects all circles of radius $\ol\rho'$ in $\S^2$ at least twice for all $\ol\rho'>\ol\rho$. So,  by the Crofton formula, Lemma \ref{lem:crofton},
$$
\ol L\geq \frac{1}{4\sin(\ol\rho)}\int_{\S^2} 2\geq \frac{2\pi}{\sin(\ol\rho)}
$$ 
where $\ol L$ denotes the length of $\ol\gamma$. Finally note that 
$$
L\geq m\ol L,
$$
 since $m\ol L$ is the length of the projection of $\gamma$ into the sphere of radius $m$ centered at the origin, and $\|\gamma\|\geq m$. Combining the last three displayed expressions completes the proof.
\end{proof}

\subsection{Estimates for the $n^{th}$ inradius}\label{sec:nth}

The convex hull of a closed curve $\gamma$ in $\R^3$ coincides with the set of all points $p$ such that almost every plane through $p$ intersects $\gamma$ in at least $2$ points. Motivated by this phenomenon, the $n^{th}$ \emph{hull} of $\gamma$  has been defined \cite{CKKS} as the set of all points $p$ such that every plane through $p$ intersects $\gamma$  in at least $2n$ points. Accordingly, the $n^{th}$ \emph{inradius} $r_n$ of $\gamma$  may be defined as the supremum of the radii of all balls which are contained in the $n^{th}$ hull of $\gamma$  and do not intersect $\gamma$, which generalizes the notion of the inradius defined in the introduction. The proof of \eqref{eq:3}
may now be easily generalized as follows:

\begin{thm}\label{thm:nth}
For any closed rectifiable curve $\gamma\colon[a,b]\to\R^3$, 
$$
\frac{L}{r_n}\geq 6\sqrt 3\,n.
$$
\end{thm}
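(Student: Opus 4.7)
The plan is to follow the proof of inequality \eqref{eq:3} essentially verbatim, replacing the factor of $2$ appearing in Lemma \ref{lem:8pi} by $2n$ throughout. After rescaling and translating, I may assume $r_n=1$, so that $\S^2$ is a unit sphere contained in the $n^{th}$ hull of $\gamma$ and disjoint from $\gamma$, and it suffices to show $L \geq 6\sqrt{3}\, n$.

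The first step is to generalize Lemma \ref{lem:8pi}: if $\S^2$ is contained in the $n^{th}$ hull of $\gamma$, then $H(\gamma) \geq 8\pi n$. This is immediate from the definition of the $n^{th}$ hull, since for every $p \in \S^2$ the tangent plane $T_p\S^2$ passes through $p$, which lies in the $n^{th}$ hull, and hence $T_p\S^2$ meets $\gamma$ in at least $2n$ points. Integrating over $\S^2$ yields
$$H(\gamma) = \int_{\S^2} \#\gamma^{-1}(T_p\S^2)\, dp \geq 2n\cdot 4\pi = 8\pi n.$$

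Next, I would approximate $\gamma$ by polygonal curves $\gamma_i$ with $L[\gamma_i] \to L$ such that $\S^2$ is still contained in the $n^{th}$ hull of $\gamma_i$, and then apply Lemma \ref{lem:tildegamma} to produce polygonal curves $\tilde\gamma_i$ which inspect $\S^2$ efficiently with $L[\tilde\gamma_i] \leq L[\gamma_i]$. Combining the generalized Lemma \ref{lem:8pi} (applied to $\tilde\gamma_i$) with Lemma \ref{lem:alpha} gives
$$8\pi n \leq H(\tilde\gamma_i) \leq \frac{4\pi}{3\sqrt{3}}\, L[\tilde\gamma_i] \leq \frac{4\pi}{3\sqrt{3}}\, L[\gamma_i],$$
so that $L[\gamma_i] \geq 6\sqrt{3}\, n$, and the theorem follows by passing to the limit.

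The main technical subtlety is ensuring that Lemma \ref{lem:tildegamma}'s reduction preserves the containment of $\S^2$ in the $n^{th}$ hull, so that the generalized lower bound $H(\tilde\gamma_i)\geq 8\pi n$ actually applies to $\tilde\gamma_i$. The reduction replaces a polygonal subarc lying inside the tangent cone $C$ by a single straight segment along $\partial C$, and in principle this could lower the intersection multiplicity of certain planes with the curve. Verifying that every tangent plane of $\S^2$ still meets $\tilde\gamma_i$ in at least $2n$ points after the reduction---or, failing that, refining the reduction, or approximating $\gamma$ directly by smooth curves that efficiently inspect $\S^2$ while retaining the $n^{th}$ hull property---is the step I expect to require the most care.
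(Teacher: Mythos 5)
Your proposal is essentially the paper's own proof: reduce to $r_n=1$, observe that every tangent plane of $\S^2$ now meets $\gamma$ at least $2n$ times so that $H(\gamma)\geq 8\pi n$, and combine this with the upper bound $H\leq \tfrac{4\pi}{3\sqrt3}L$ of Lemma \ref{lem:alpha}. The one subtlety you flag---whether the polygonal approximation and the reduction of Lemma \ref{lem:tildegamma} (which deletes an entire subarc lying inside the tangent cone and replaces it by a single segment) preserve the $2n$-fold intersection property with every tangent plane---is a legitimate concern, but it is not resolved in the paper either: the published proof simply writes ``as in the proof of Theorem \ref{thm:main2}'' and applies Lemma \ref{lem:alpha} directly to $\gamma$, leaving the passage to an efficient piecewise $\C^1$ curve, and the preservation of the $n^{th}$ hull condition under that passage, implicit. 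So you have reproduced the intended argument and correctly located the only step where additional care would be needed to make it fully rigorous for $n>1$.
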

\begin{proof}
As in the proof of Theorem \ref{thm:main2}, we may assume that $r_n=1$, and $\S^2$ is a sphere of maximal radius contained in the $n^{th}$ hull of $\gamma$, and whose interior is disjoint from $\gamma$. Then every tangents plane $T_p\S^2$ intersects $\gamma$ at least $2n$ times, and consequently $H(\gamma)\geq 4\pi\times 2n$ by the definition of the horizon. On the other hand $H(\gamma)\leq 4\pi/(3\sqrt3)L$ by Lemma \ref{lem:alpha}. Thus
$$
8n\pi \leq H(\gamma)\leq \frac{4\pi}{3\sqrt3}L,
$$
which yields $L\geq 6\sqrt 3\,n$ as desired.
\end{proof}

The notion of $n^{th}$ hull is of interest in geometric knot theory, since it was established in \cite{CKKS} that knotted curves have nonempty second hulls.

\subsection{Estimates for width and inradius  in $\R^n$}\label{sec:Rn}

The generalized Wienholtz theorem (Corollary \ref{cor:genwienholtz}) together with the length decomposition lemma (Lemma \ref{lem:wienholtz}) quickly yield:

\begin{lem}
Let $\gamma\colon[a,b]\to\R^n$ be a rectifiable curve. Suppose that for all projections of $\gamma$ into hyperplanes of $\R^n$ we have $L/w\geq c_1$ and $L/r\geq c_2$. Then 
$$
L\geq\sqrt{c_1^2+9}\,w\quad\text{and}\quad L\geq\sqrt{c_2^2+36}\,r
$$
Further, if $\gamma$ is closed, and for all projections of $\gamma$ into hyperplanes of $\R^n$ we have $L/w\geq c_3$ and $L/r\geq c_4$, then 
$$
L\geq\sqrt{c_3^2+16}\,w\quad\text{and}\quad L\geq \sqrt{c_4^2+64}\,r.
$$\qed
\end{lem}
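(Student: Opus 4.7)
The plan is to repeat, in close analogy to Theorem \ref{thm:main1} and the general case of Theorem \ref{thm:main2}, the same split-and-decompose argument: use the generalized Wienholtz theorem (Corollary \ref{cor:genwienholtz}) to extract a privileged pair of parallel bounding hyperplanes from which a Wienholtz-type bound $L_1\geq kw$ follows, estimate the length $L_2$ of the complementary projection by the assumed $(n-1)$-dimensional constants, and combine the two via the length-decomposition lemma (Lemma \ref{lem:wienholtz}).

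First I would invoke Corollary \ref{cor:genwienholtz} to obtain parallel bounding hyperplanes $H_0,H_1\subset\R^n$ and parameters $t_0<t_1<t_2<t_3$ with $\gamma(t_0),\gamma(t_2)\in H_0$ and $\gamma(t_1),\gamma(t_3)\in H_1$. Let $d\geq w$ be the distance between $H_0$ and $H_1$. Decompose $\R^n=V_1\oplus V_2$, where $V_1$ is the line orthogonal to $H_0$ and $V_2$ is the parallel translate of $H_0$ through the origin, and let $\gamma_i$, $L_i$ denote the orthogonal projection of $\gamma$ into $V_i$ and its length, so that $L\geq\sqrt{L_1^2+L_2^2}$ by Lemma \ref{lem:wienholtz}.

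Next I would bound $L_1$ using the alternating contacts: $\gamma_1$ must travel the distance $d$ between the two projected points $\pi_1(H_0)$ and $\pi_1(H_1)$ at least three times (over the subintervals $[t_0,t_1]$, $[t_1,t_2]$, $[t_2,t_3]$), giving $L_1\geq 3d\geq 3w$. If $\gamma$ is closed, then $\gamma_1$ must close up, forcing a fourth crossing and $L_1\geq 4d\geq 4w$. Since $w\geq 2r$, these also yield $L_1\geq 6r$ and $L_1\geq 8r$ in the general and closed cases respectively; these four constants $3,6,4,8$ are exactly the source of the $9,36,16,64$ inside the square roots of the conclusion.

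For $L_2$ I would apply the hypothesis to $\gamma_2\subset V_2\cong\R^{n-1}$, which inherits closedness from $\gamma$. The width $w_2$ of $\gamma_2$ satisfies $w_2\geq w$: any pair of parallel bounding hyperplanes of $\gamma_2$ in $V_2$ pulls back under $\pi_2$ to a pair of parallel bounding hyperplanes of $\gamma$ in $\R^n$ at the same separation. The inradius $r_2$ of $\gamma_2$ satisfies $r_2\geq r$ via the convex-geometric chain $r_2\geq\inrad(\conv(\gamma))\geq r$ already used in the general case of Theorem \ref{thm:main2}. Hence by hypothesis $L_2\geq c_1 w$ and $L_2\geq c_2 r$ in the general case, and $L_2\geq c_3 w$ and $L_2\geq c_4 r$ when $\gamma$ is closed.

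Combining these estimates through $L\geq\sqrt{L_1^2+L_2^2}$ yields all four conclusions, e.g.\ $L\geq\sqrt{(3w)^2+(c_1 w)^2}=\sqrt{c_1^2+9}\,w$ in the general width case. The only non-routine step, and thus the main obstacle, is the inradius monotonicity $r_2\geq r$ under projection: a ball inscribed in $\conv(\gamma)\setminus\gamma$ projects to a disk in $\conv(\gamma_2)$ that need not be disjoint from $\gamma_2$, so the comparison cannot be carried out purely on the curve level, and must instead be routed through the (disjointness-free) inradius of the convex hull, exactly as in Section \ref{sec:main2}.
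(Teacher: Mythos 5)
Your proposal is correct and is exactly the argument the paper intends: the lemma is stated with its proof omitted precisely because Corollary \ref{cor:genwienholtz} gives $L_1\geq 3w$ (resp.\ $4w$, hence $6r$, $8r$ via $w\geq 2r$), the hypothesis applied to the hyperplane projection gives $L_2\geq c_i\,w$ or $c_i\,r$ (using $w_2\geq w$ and $r_2\geq\inrad(\conv\gamma)\geq r$, just as in the proofs of Theorems \ref{thm:main1} and \ref{thm:main2}), and Lemma \ref{lem:wienholtz} combines them. Your closing remark on the inradius monotonicity correctly identifies and resolves the one subtle point in the same way the paper does in Section \ref{sec:main2}.
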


Thus we may inductively extend our estimates for the width and inradius problems, in Theorems \ref{thm:main1} and \ref{thm:main2}, to higher dimensions:

\begin{thm}
Let $\gamma\colon[a,b]\to\R^{2+k}$ be a rectifiable curve. Then
$$
L\geq\sqrt{2.2782^2+9k}\,w\quad\text{and}\quad L\geq\sqrt{(\pi+2)^2+36k}\,r.
$$
Further, if $\gamma$ is closed,
$$
L\geq\sqrt{\pi^2+16k}\,w\quad\text{and}\quad L\geq\sqrt{(6\sqrt{3})^2+64(k-1)}\,r.
$$\qed
\end{thm}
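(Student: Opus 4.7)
The plan is to prove all four inequalities simultaneously by induction on $k$, using the preceding lemma as the induction step. That lemma is itself a direct consequence of the generalized Wienholtz theorem (Corollary \ref{cor:genwienholtz}) combined with the length-decomposition lemma (Lemma \ref{lem:wienholtz})---the coefficients $9$, $16$, $36$, $64$ in the lemma arise from $L_1\geq 3w$ (general), $L_1\geq 4w$ (closed), and the chain $w\geq 2r$---so no new geometric input is needed beyond what has already been assembled.

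For the base case $k=0$, where $\gamma$ lies in $\R^2$, the four coefficients on the right-hand sides reduce to the known planar bounds already invoked earlier in the paper: Zalgaller's caliper estimate gives $L/w\geq 2.2782$ for general curves, the theorem of Joris gives $L/r\geq\pi+2$, and the Barbier inequality (Corollary \ref{cor:cauchy}) gives $L/w\geq\pi$ for closed curves. For the closed inradius bound the natural base is instead $k=1$, at which the formula $\sqrt{(6\sqrt{3})^2+64(k-1)}$ collapses to $6\sqrt{3}$, matching \eqref{eq:3} of Theorem \ref{thm:main2}.

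For the inductive step, assume the claimed bounds hold for all rectifiable (respectively closed) curves in $\R^{2+(k-1)}$ and let $\gamma\colon[a,b]\to\R^{2+k}$. For every hyperplane $H\subset\R^{2+k}$, the orthogonal projection of $\gamma$ into $H\cong\R^{2+k-1}$ is again a rectifiable curve, and is closed whenever $\gamma$ is; moreover, the width and the (convex-hull) inradius of the projection are each at least as large as those of $\gamma$---trivially for width, since the infimum defining $w$ in $H$ is taken over strictly fewer directions, and for the inradius exactly as noted in the proof of \eqref{eq:2}. By the inductive hypothesis, each projection therefore satisfies the four inequalities with $k$ replaced by $k-1$, so the hypotheses of the preceding lemma hold with
\[
c_1=\sqrt{2.2782^2+9(k-1)},\quad c_2=\sqrt{(\pi+2)^2+36(k-1)},
\]
\[
c_3=\sqrt{\pi^2+16(k-1)},\quad c_4=\sqrt{(6\sqrt{3})^2+64(k-2)}.
\]
Applying the lemma and simplifying $\sqrt{c_i^2+d}$ with $d\in\{9,36,16,64\}$ respectively produces precisely the four bounds claimed in the theorem.

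There is no serious obstacle: the inductive step is purely formal once the monotonicity of width and convex-hull inradius under orthogonal projection is noted, and the additive structure of the constants under the radical telescopes cleanly. The only small point to verify is that the closed inradius induction must start at $k=1$ rather than $k=0$---consistent with the form $64(k-1)$ in the statement---since \eqref{eq:3} rather than any planar result provides the required base estimate.
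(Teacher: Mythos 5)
Your proposal is correct and follows exactly the route the paper intends: the theorem is stated with no written proof precisely because it is the inductive iteration of the preceding lemma, with base cases given by the planar bounds (caliper, Barbier, Joris) for $k=0$ and by inequality \eqref{eq:3} at $k=1$ for the closed inradius case. Your verification of the monotonicity of width and convex-hull inradius under orthogonal projection, and of the telescoping of the constants under the radical, matches the paper's argument.
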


\section*{Acknowledgements}

The author thanks Joseph O'Rourke for posting the sphere inspection problem on MathOverflow \cite{orourkeMO} which provided the initial stimulus for this work. Thanks also to the commenters on MathOverflow, specially Gjergji Zaimi and Jean-Marc Schlenker, for useful  observations on this problem. Finally the author thanks Igor Belegradek for his translation of Zalgaller's $L_5$-conjecture, included in  Note \ref{note:L5}.

\bibliographystyle{abbrv}

\end{document}